\newtheorem{theorem}{Theorem}[section]
\newtheorem{conjecture}[theorem]{Conjecture}
\newtheorem{lemma}[theorem]{Lemma}
\newtheorem{proposition}[theorem]{Proposition}
\theoremstyle{definition}
	\newtheorem{remark}[theorem]{Remark}
\theoremstyle{definition}
\DeclarePairedDelimiter\ceil{\lceil}{\rceil}
\DeclarePairedDelimiter\floor{\lfloor}{\rfloor}
\newcommand{\F}{\mathbb{F}}
\begin{document}
	
\author{Melissa Lee and Tomasz Popiel}
\address{Department of Mathematics, University of Auckland, Auckland, New Zealand}
\email{melissa.lee@auckland.ac.nz, tomasz.popiel@auckland.ac.nz}

\title{Saxl graphs of primitive affine groups with sporadic point stabilisers}

\begin{abstract} 
Let $G$ be a permutation group on a set $\Omega$. 
A {\em base} for $G$ is a subset of $\Omega$ whose pointwise stabiliser is trivial, and the {\em base size} of $G$ is the minimal cardinality of a base. 
If $G$ has base size $2$, then the corresponding {\em Saxl graph} $\Sigma(G)$ has vertex set $\Omega$ and two vertices are adjacent if they form a base for $G$. 
A recent conjecture of Burness and Giudici states that if $G$ is a finite primitive permutation group with base size $2$, then $\Sigma(G)$ has the property that every two vertices have a common neighbour. 
We investigate this conjecture when $G$ is an affine group and a point stabiliser is an almost quasisimple group whose unique quasisimple subnormal subgroup is a covering group of a sporadic simple group. 
We verify the conjecture under this assumption, in all but ten cases. 
\end{abstract}

\date{\today}
\maketitle

\section{Introduction}	 \label{s:intro}

A {\em base} for a permutation group $G \leqslant \text{Sym}(\Omega)$ is a subset $B$ of $\Omega$ with the property that the pointwise stabiliser of $B$ in $G$ is trivial. 
The {\em base size} $b(G)$ of $G$ is the minimal cardinality of a base for $G$. 
Bases have been studied since the late 19th century \cite{Bochert}, with particular emphasis on {\em primitive} groups, namely transitive groups that preserve no non-trivial partition of $\Omega$, and groups with small bases. 
We refer the interested reader to the survey article \cite{BCsurvey} for details concerning the history and applications of bases. 

Note that $b(G)=1$ if and only if $G$ has a {\em regular orbit} on $\Omega$, meaning that there exists $\omega \in \Omega$ whose stabiliser $G_\omega \leqslant G$ is trivial; and $b(G)=2$ if and only if there exists $\omega \in \Omega$ such that $G_\omega$ has a regular orbit on $\Omega$. 
When $b(G)=2$, the {\em Saxl graph} of $G$ is defined to be the graph $\Sigma(G)$ with vertex set $\Omega$ and vertices $\omega_1$ and $\omega_2$ forming an edge if and only if $\{\omega_1,\omega_2\}$ is a base for $G$. 
Observe (see Lemma~\ref{lemma2.1}) that if $G$ acts transitively on $\Omega$ then it is a vertex-transitive subgroup of $\text{Aut}(\Sigma(G))$, because the image of a base under an element of $G$ is again a base, and that if $G$ acts primitively on $\Omega$ then $\Sigma(G)$ is connected, because the connected components form a system of imprimitivity.

Saxl graphs are named in honour of Jan Saxl, who initiated a project to classify the finite primitive permutation groups with base size~$2$. 
They were introduced by Burness and Giudici \cite{BG}, who made the following conjecture.

\begin{conjecture}[Burness and Giudici \cite{BG}] \label{BGconj}
If $G$ is a finite primitive permutation group with $b(G)=2$, then every two vertices of $\Sigma(G)$ have a common neighbour. 
\end{conjecture}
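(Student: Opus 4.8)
The plan is to attack the conjecture by a reduction that settles almost all cases with a single uniform inequality and then confronts the residual configurations through the O'Nan--Scott theorem. By Lemma~\ref{lemma2.1} the group $G$ acts vertex-transitively on $\Sigma(G)$, so it suffices to fix one vertex $\alpha \in \Omega$, with point stabiliser $H = G_\alpha$, and to prove that $\alpha$ and every other vertex $\beta$ admit a common neighbour. The neighbours of $\alpha$ are exactly the points lying in regular $H$-orbits, so the valency of $\Sigma(G)$ equals $(1 - Q(G))\,|\Omega|$, where
\[
Q(G) = \frac{1}{|\Omega|}\,\bigl|\{\gamma \in \Omega : H \cap G_\gamma \neq 1\}\bigr|
\]
measures the proportion of points that fail to be neighbours of $\alpha$.

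The principal tool is an elementary union bound. If $Q(G) < 1/2$, equivalently the valency exceeds $|\Omega|/2$, then the neighbourhoods of $\alpha$ and of $\beta$ each comprise more than half of $\Omega$ and so must intersect in a common neighbour; thus the whole difficulty collapses onto the groups with $Q(G) \geq 1/2$. To bound $Q(G)$ I would use the fixed-point-ratio estimate
\[
Q(G) \;\leq\; \sum_{x} |x^{G} \cap H| \cdot \mathrm{fpr}(x), \qquad \mathrm{fpr}(x) = \frac{|\mathrm{Fix}_{\Omega}(x)|}{|\Omega|},
\]
the sum running over representatives of the $G$-classes of prime-order elements: the strong fixed-point-ratio bounds of Liebeck--Shalev, Guralnick--Kantor and Burness then force $Q(G) < 1/2$ whenever the degree is large relative to the size of the point stabiliser.

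To turn this into a complete argument I would invoke the O'Nan--Scott theorem and treat the affine, almost simple, diagonal, product, and twisted-wreath types separately, using in each the classification of base-two primitive actions to cut down to a tractable list. The affine case is especially clean: for $G = V \rtimes H$ with $H \leqslant \mathrm{GL}(V)$ the Saxl graph is the Cayley graph on $(V,+)$ whose connection set $S$ is the union of the regular $H$-orbits, and $\alpha,\beta$ have a common neighbour precisely when $S \cap (S + (\beta - \alpha)) \neq \emptyset$; this again follows from $|S| > |V|/2$, and in the remaining small-$S$ cases can be pursued directly through the module structure of $V$ as an $H$-module. For the almost simple type one combines character-theoretic orbit-counting with explicit computation (in GAP or Magma) in the relevant permutation representation, while the product and twisted-wreath types reduce, via the wreath-product structure of $\Omega$, to the almost simple and diagonal factors.

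The hard part will be the residual cases in which $Q(G) \geq 1/2$, where the union bound fails and a common neighbour must be exhibited by a genuinely finer argument. These are exactly the actions of small degree with exceptionally large point stabilisers --- where the Saxl graph may have small valency --- and they are scattered across every family of finite simple groups as well as the low-dimensional affine examples. For them I would fall back on a second-moment or inclusion--exclusion estimate for the number of common neighbours, supplemented by explicit computation; it is precisely this stubborn, classification-wide collection of small cases, rather than any single structural phenomenon, that makes the conjecture difficult, and it is there that the affine analysis carried out in the present paper contributes one further family.
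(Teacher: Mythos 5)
The statement you were asked to prove is Conjecture~\ref{BGconj}, and the first thing to say is that it is an \emph{open conjecture}: the paper contains no proof of it, and neither does the literature. The paper only verifies it for one family (affine groups whose point stabilisers are almost quasisimple with sporadic layer), and even there ten cases are left open in Table~\ref{todo}. Your proposal is therefore not in competition with a proof in the paper; it has to stand on its own, and it does not. It is a research programme rather than an argument: every genuinely hard step is deferred. The valency criterion you state (valency at least $|\Omega|/2$ forces common neighbours) is exactly Lemma~\ref{bg_cond}, your fixed-point-ratio union bound is the analogue of Lemma~\ref{lemma2.5}, and your Cayley-graph reformulation of the affine case is precisely Lemma~\ref{sum_cond} --- so the machinery you describe is the standard machinery, and it is known \emph{not} to suffice. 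The paper's Table~\ref{todo} exhibits ten explicit affine modules where the union-bound techniques (the paper's (T1) and (T2), i.e.\ your $Q(G)<1/2$ criterion) provably fail to give the conclusion and where direct enumeration and sumset checks ((T3) and (T4), i.e.\ your ``explicit computation'' fallback) are computationally infeasible. Your final paragraph, which concedes that the residual $Q(G)\geqslant 1/2$ cases require ``a genuinely finer argument,'' is precisely the open problem restated, not a step towards solving it; no second-moment or inclusion--exclusion argument is actually formulated, and none is known that closes these cases.

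Two further concrete defects. First, you invoke ``the classification of base-two primitive actions'' to cut each O'Nan--Scott type down to a tractable list, but no such classification exists --- determining which primitive groups satisfy $b(G)=2$ is itself the unfinished programme initiated by Saxl, and for the affine type it is hard enough that Theorem~\ref{thmb=2} of this paper (together with \cite{FMOW}) is needed just to settle it for sporadic stabilisers. Your reduction is therefore circular in the sense that it assumes as input a result at least as deep as large parts of the conjecture's case analysis. Second, the residual set where fixed-point-ratio bounds fail is not a finite list of exceptions that computation can exhaust: for the affine type alone it includes infinite families parametrised by field size and module (as the case analysis in Section~\ref{s:proof} of this paper illustrates over many pages for a single choice of stabiliser type), so ``supplemented by explicit computation'' cannot be a uniform endgame. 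In short, your outline correctly identifies the two-step shape that all partial results share --- a valency/union bound plus case-by-case analysis --- but it proves nothing beyond what Lemmas~\ref{lemma2.1}, \ref{bg_cond}, \ref{sum_cond} and \ref{lemma2.5} already give, and the conjecture remains open exactly where your sketch stops.
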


Burness and Giudici verified their conjecture for various classes of primitive groups, including 
\begin{itemize}
\item those of degree less than $4096$ (see their Remark~4.8), 
\item certain `large' groups of diagonal or twisted wreath type (Propositions~4.6--4.7),
\item alternating and symmetric groups with primitive point stabilisers (Theorem~5.1), 
\item several sporadic simple groups and their almost simple extensions (Theorem~6.1). 
\end{itemize}
They also demonstrated that the assumption of primitivity is essential, by constructing imprimitive groups whose  Saxl graphs are connected with arbitrarily large diameter, or disconnected with arbitrarily many connected components (Propositions~4.1 and 4.2). 

Burness and Huang~\cite{BurnessHuang} have verified Conjecture~\ref{BGconj} for almost simple groups with socle $\text{PSL}(2,q)$, and for almost simple groups with soluble point stabilisers. 
The former result built on work of Chen and Du~\cite{ChenDu}, who proved that Saxl graphs of almost simple groups with socle $\text{PSL}(2,q)$ have diameter $2$, which is a weaker condition than the conclusion of  Conjecture~\ref{BGconj}. 
In addition, Chen and Huang~\cite{ChenHuang} have calculated the valency of $\Sigma(G)$ when $G$ is an almost simple primitive group with socle an alternating group and soluble point stabiliser. 
(As noted in \cite{BG,ChenHuang}, and in our Lemma~\ref{bg_cond}, $\Sigma(G)$ has the property that every two vertices have a common neighbour if its valency is at least $|\Omega|/2$.)

Here we study Conjecture~\ref{BGconj} when $G$ is a primitive affine group with sporadic point stabiliser. 
Explicitly, we prove the following theorem.

\begin{theorem} \label{mainthm}
Let $G$ be a finite almost quasisimple group with $\textnormal{soc}(G/Z(G))$ a sporadic simple group. 
Let $V$ be a non-trivial faithful irreducible $\mathbb{F}_rG$-module, where $r$ is a prime power, and write $d = \dim_{\mathbb{F}_r}(V)$. 
Suppose further that $(G,d,r)$ does not appear in Table~$\ref{todo}$. 
Then, if the affine group $GV$ satisfies $b(GV)=2$, its Saxl graph $\Sigma(GV)$ has the property that every two vertices have a common neighbour. 
\end{theorem}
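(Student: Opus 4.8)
The plan is to work in the affine setting where $\Omega = V$ and $GV \leqslant \mathrm{AGL}(V)$ has point stabiliser $(GV)_0 = G$ acting on $V$. A base of size $2$ for an affine group consists of a pair $\{u,v\}$, and since translations act regularly on $V$ we may always take one point to be $0$; thus $\{0,v\}$ is a base if and only if the stabiliser $G_v$ is trivial, i.e.\ $v$ lies in a regular orbit of $G$ on $V$. Hence the vertices adjacent to $0$ in $\Sigma(GV)$ are exactly the points of the regular orbits of $G$, and by vertex-transitivity (Lemma~\ref{lemma2.1}) the neighbourhood of an arbitrary vertex $u$ is the translate of this set. The first step is therefore to translate the common-neighbour condition into this affine language: two vertices $u$ and $w$ have a common neighbour if and only if some $v \in V$ lies in a regular orbit of $G$ with $v-u$ and $v-w$ also suitably related, which after translating by $-u$ reduces to showing that for every nonzero $x \in V$ there exists $v$ such that both $v$ and $v-x$ lie in regular $G$-orbits. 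Equivalently, writing $R \subseteq V$ for the union of the regular orbits, I must show $R \cap (R + x) \neq \varnothing$ for all $x$, i.e.\ $R$ is a \emph{difference-covering} set: every $x \in V$ is a difference of two elements of $R$.

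The second step is to make this countable via the sufficient valency criterion quoted from Lemma~\ref{bg_cond}: if the valency of $\Sigma(GV)$ is at least $|V|/2$, then every pair of vertices has a common neighbour automatically. The valency here equals $|R|$, the total number of points in regular orbits, which is $r^d$ minus the number of points with nontrivial stabiliser. So the main quantitative task is to bound $|R|$ from below, or equivalently to bound from above the number of $v \in V$ fixed by some nonidentity element of $G$. For each $1 \neq g \in G$, the fixed-point space $C_V(g)$ is an $\mathbb{F}_r$-subspace of dimension equal to the multiplicity of the eigenvalue $1$ (more precisely $\dim \ker(g - 1)$ over $\mathbb{F}_r$), and the points lying in \emph{some} nontrivial fixed space are contained in $\bigcup_{1 \neq g} C_V(g)$. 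A union bound gives
\begin{equation}
|V \setminus R| \;\leqslant\; \sum_{1 \neq g \in G} |C_V(g)| \;\leqslant\; \sum_{i} |g_i^G|\, r^{\dim C_V(g_i)},
\end{equation}
where the sum runs over representatives $g_i$ of the nontrivial conjugacy classes of $G$ and $|g_i^G|$ is the class size. Thus I would assemble the fixed-point space dimensions $\dim C_V(g)$ for each class of $G$ acting on the specific module $V$, read the class sizes from the character table data, and check whether
\begin{equation}
\sum_{1 \neq g \in G} r^{\dim C_V(g)} \;<\; \tfrac{1}{2}\, r^d
\end{equation}
holds. Whenever it does, Lemma~\ref{bg_cond} closes the case immediately; the regular-orbit count $|R| > |V|/2$ also confirms $b(GV)=2$.

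The hard part, and where the ten exceptional cases in Table~\ref{todo} will come from, is the regime where the valency bound is too weak to reach $|V|/2$ — typically when $d$ is small relative to $|G|$, or when $G$ has elements (often involutions or other low-order elements) whose fixed spaces $C_V(g)$ have dimension close to $d$, so that the dominant terms $r^{\dim C_V(g)}$ in the sum are comparable to $r^d$ itself. In those borderline cases the crude union bound over all of $\bigcup_g C_V(g)$ overcounts badly and the $|V|/2$ threshold simply cannot be met, even though the common-neighbour property may still hold. The strategy there is to abandon the global valency criterion and instead verify the difference-covering condition more delicately: one partitions the analysis by the element $x \in V$ (or by $G$-orbit representatives of such $x$, since it suffices to treat one $x$ per orbit) and shows, again by an inclusion–exclusion or union-bound estimate restricted to the two translates, that $|R \cap (R+x)| > 0$, i.e.\
\begin{equation}
|V \setminus R| + |V \setminus (R+x)| \;<\; |V|
\end{equation}
so that $R$ and $R+x$ must intersect. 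This refined count, carried out case by case against the explicit module and class data for each covering group of a sporadic simple group, is where the computational and combinatorial effort concentrates, and the residual cases that resist even this finer argument are precisely those collected in Table~\ref{todo} and left open.
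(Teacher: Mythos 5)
Your reduction to the affine picture is correct and matches the paper's: the neighbours of $0$ are exactly the regular-orbit vectors, the common-neighbour property is equivalent to every vector being a sum (equivalently a difference, since $R=-R$) of two regular vectors (Lemma~\ref{sum_cond}), and the valency criterion of Lemma~\ref{bg_cond} combined with a union bound over fixed-point spaces is indeed the paper's main counting tool (Lemma~\ref{lemma2.5}). But there are two genuine gaps. First, your counting plan requires the actual dimensions $\dim C_V(g)$ for the specific module $V$, hence an explicit construction of the representation, for every prime power $r$ --- and there are infinitely many triples $(G,d,r)$ to treat, so your case analysis never terminates. The paper makes the problem finite by replacing $\dim C_V(g)$ with the a priori bound $\lfloor(1-1/\alpha(g))d\rfloor$ of Lemma~\ref{dimCVg}, where $\alpha(g)\leqslant 6$ for sporadic groups by Theorem~\ref{alphas}, and assembling these into a function $f(G,d,r)$ that is monotone in $d$ and $r$ (Lemma~\ref{f-increasing}); this shows that all but an explicit finite list of $(d,r)$ satisfy the valency criterion automatically. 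Relatedly, since $G$ may contain an arbitrary scalar subgroup $Z\leqslant\mathbb{F}_r^\times$, one cannot simply ``read the class sizes from the character table'' of $G$: the paper instead works with prime-order classes of $G/Z(G)$ and counts eigenspaces of $kh$, $k\in\overline{\mathbb{F}}_r^\times$, over the algebraic closure (the second inequality of Lemma~\ref{lemma2.5} and Lemma~\ref{lemma2.8}), which is what makes the estimate uniform in $Z$.

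Second, your proposed refinement for the borderline cases is vacuous. Since $R+x$ is a translate of $R$, the inclusion--exclusion condition $|V\setminus R|+|V\setminus(R+x)|<|V|$ is exactly $2|V\setminus R|<|V|$, i.e.\ valency at least $|V|/2$ --- the very criterion you were trying to go beyond; fixing a particular $x$ gains nothing at the level of cardinalities. The paper genuinely needs different techniques in these cases: it either computes the exact number of regular orbits by orbit enumeration or random search in {\sc Magma} (technique (T3)), or, when the valency is or may be below $|V|/2$, it verifies the sum condition of Lemma~\ref{sum_cond} directly by finding a small explicit set of regular vectors and checking that every vector of $V$ is a sum of two of them (technique (T4)); for instance $G\cong\mathrm{M}_{11}$ acting on $V_5(9)$ is settled this way. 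Your proposal has no mechanism for closing such cases, so as written it would prove the theorem only with a strictly larger exceptional list than Table~\ref{todo}.
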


\begin{table}[!t]
\begin{tabular}{llr}
\toprule
$G$ & $d$ & $r$ \\
\midrule
$3.\text{Fi}_{22}$ & $27$ & $4$ \\		
$3.\text{Fi}_{22}$ & $54$ & $2$ \\		
$3.\text{Fi}_{22}.2$ & $54$ & $2$ \\		
$Z \times \text{Co}_1$ & $24$ & $8$ \\	
$Z \circ (2.\text{Co}_1)$ & $24$ & $9$ \\	
$Z \times \text{Co}_3$ & $23$ & $5$ \\	
$Z \times \text{Co}_3$ & $22$ & $4$ \\	
$Z \circ (6.\text{Suz})$ & $12$ & $19$ \\	
$3.\text{Suz}.2$ & $24$ & $4$ \\		
$Z \circ (3.\text{Suz})$ & $12$ & $16$ \\	
\bottomrule
\end{tabular}
\caption{The $\mathbb{F}_rG$-modules $V$ not considered in Theorem~\ref{mainthm}. 
In the first column, $Z \leqslant \mathbb{F}_r^\times$, and all possibilities for $Z$ remain open. 
}
\label{todo}
\end{table}

Here $\mathbb{F}_r$ denotes the finite field of order $r$, and $GV \leqslant \operatorname{Sym}(V)$ denotes the semidirect product of the vector space $V$ by the group $G \leqslant \text{GL}(V)$. 
Recall that $GV$ acts primitively on $V$ if and only if $G$ acts irreducibly on $V$. 
Recall also that a group $H$ is {\em almost simple} if $H_0 \leqslant H \leqslant \text{Aut}(H_0)$ for some simple group $H_0$. 
The simple group $H_0$ is the unique minimal normal subgroup of $H$, and is therefore the {\em socle} of $H$, namely the subgroup of $H$ generated by the minimal normal subgroups of $H$. 
We therefore write $H_0 = \text{soc}(H)$ in this situation. 
A group is {\em quasisimple} if it is perfect and its central quotient is simple. 
An {\em almost quasisimple} group is one that has a unique quasisimple subnormal subgroup. 

\begin{remark}
The proof of Theorem~\ref{mainthm} relies on four computational techniques outlined in Section~\ref{s:proof}, where they are labelled (T1)--(T4) for reference. 
We do not know whether the modules listed in Table~\ref{todo} are counterexamples to Conjecture~\ref{BGconj}, but we have been unable to settle the conjecture in any of these cases using any of the aforementioned techniques. 
Explicitly, we have applied the techniques labelled (T1) and (T2) in each open case, and found that they do not imply the conclusion of Theorem~\ref{mainthm}. 
The techniques labelled (T3) and (T4) seem to be computationally infeasible in all of the open cases.
\end{remark}

\begin{remark}
In some of the cases in Table~\ref{todo}, it is not even clear whether $b(GV)=2$. 
We do know, from K\"ohler and Pahlings \cite[Section~4.13]{KP}, that $b(GV)=2$ for the triple $(G,d,r) = (Z\circ (6.\text{Suz}),12,19)$. 
We also know that $b(GV)=2$ for $(G,d,r) = (3.\text{Fi}_{22},54,2)$, $(3.\text{Fi}_{22}.2,54,2)$ and $(\text{Co}_3,23,5)$. 
These cases are handled in an article of Fawcett et~al. \cite[Theorem~1.1]{FMOW}, in which $b(H)$ is determined exactly in all cases where $H$ is a covering group of an almost simple group with sporadic socle, acting on a faithful irreducible $\mathbb{F}_pH$-module with $p$ a prime dividing $|H|$. 
Although this hypothesis is more restrictive than ours, we can use \cite[Theorem~1.1]{FMOW} to check that certain groups $GV$ considered in Theorem~\ref{mainthm} do {\em not} satisfy $b(GV) = 2$. 
In particular, if $G=\langle H,Z \rangle$ for some $Z \leqslant \mathbb{F}_p^\times$ and $H$ has no regular orbit on $V$, then certainly $G$ has no regular orbit on $V$. 
More generally, the proof of Theorem~\ref{mainthm} necessarily involves determining {\em all} of the considered affine groups that do not satisfy $b(GV)=2$. 
For reference, we record this information in the following theorem. 
\end{remark}

\begin{theorem} \label{thmb=2}
Let $G$ be a finite almost quasisimple group with $\textnormal{soc}(G/Z(G))$ a sporadic simple group. 
Let $V$ be a non-trivial faithful irreducible $\mathbb{F}_rG$-module, where $r$ is a prime power, and write $d = \dim_{\mathbb{F}_r}(V)$. 
Suppose further that $(G,d,r)$ does not appear in Table~$\ref{todo}$. 
If the affine group $GV$ satisfies $b(GV)>2$, then one of the following holds.
\begin{itemize}
\item[(i)] $G = \langle H,Z \rangle$ where $H$ is a covering group of an almost simple group with sporadic socle, $r$ is a prime dividing $|H|$, $Z \leqslant \mathbb{F}_r^\times$, and $(H,d,r)$ appears in \cite[Table~1]{FMOW}. 
\item[(ii)] $(G,d,r)$ appears in Table~$\ref{b=2}$. 
\end{itemize}
Conversely, $b(GV)>2$ in each case appearing in either \textnormal{(i)} or \textnormal{(ii)}.
\end{theorem}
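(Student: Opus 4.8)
The plan is to reduce the determination of $b(GV)$ to a regular-orbit question for $G \leqslant \text{GL}(V)$ acting on $V$, and then to decide regular-orbit existence by a fixed-point-space count over elements of prime order. Recall that $b(GV)=2$ precisely when some $v \in V$ has trivial stabiliser in $G$; since any non-identity element fixing $v$ has a power of prime order that also fixes $v$, the set of non-regular vectors is exactly $\bigcup_{g} C_V(g)$, where $g$ ranges over the prime-order elements of $G$ and $C_V(g)=\ker(g-1)$ is the fixed-space. Hence $b(GV)=2$ whenever
\[
\sum_{[g]} |g^G| \, r^{\dim_{\mathbb{F}_r} C_V(g)} < r^d,
\]
the sum running over conjugacy classes of prime-order elements of $G$. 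This union bound is the primary tool, and whenever it succeeds it certifies a regular orbit and so places $(G,d,r)$ outside the exceptional list.

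First I would assemble the complete list of admissible triples $(G,d,r)$. Writing $G/Z(G)$ as an almost simple group with socle the sporadic group $S$, the unique quasisimple subnormal subgroup of $G$ is a covering group $m.S$, and $G$ is built from $m.S$ (or $m.S.2$ when the relevant outer automorphism lifts to $\text{GL}(V)$) together with a scalar group $Z \leqslant \mathbb{F}_r^\times$, forced by faithfulness to contain the image of $Z(G)$. Using the modular Atlas together with the GAP character-table and representation libraries, I would list the faithful absolutely irreducible Brauer characters of each such cover in each relevant characteristic, record the field of definition $\mathbb{F}_r$ with $r=p^f$, and thereby enumerate all $(G,d,r)$ up to the permitted scalar extensions, discarding those in Table~\ref{todo}.

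Next, for each admissible triple I would compute the fixed-space dimensions entering the bound. For a prime-order element $g$ of order coprime to the characteristic, $\dim_{\mathbb{F}_r} C_V(g)$ is the multiplicity of the eigenvalue $1$, recoverable from the Brauer character values $\chi(g^i)$ by the usual inner-product formula; the scalars are handled by replacing $C_V(g)$ with the eigenspace $\ker(g-z^{-1})$ for an element $zg \in \langle H,Z\rangle$. The delicate case is $g$ of order $p$, where the Brauer character records only the trace and not the Jordan structure; here I would read off $\dim_{\mathbb{F}_r} C_V(g)$ from explicit matrix generators in the Atlas of Group Representations, or from known unipotent Jordan data. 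Evaluating the bound then separates the triples into those with a guaranteed regular orbit and a short residual list.

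The main obstacle is this residual list: triples for which the crude union bound fails but a regular orbit may nonetheless exist. For these I would deploy the sharper methods of Section~\ref{s:proof} — refined inclusion-exclusion accounting for intersections $C_V(g)\cap C_V(h)$, direct construction of a regular vector, and, where feasible, an exhaustive or randomised orbit computation — to decide each case. The genuine non-regular cases produced this way, together with those already covered by \cite[Theorem~1.1]{FMOW}, are precisely the entries of Table~\ref{b=2} and the family in part (i), and the converse assertion amounts to the statement that each such case has been verified, directly or via \cite{FMOW}, to admit no regular orbit. I expect the hardest points to be the accurate treatment of unipotent fixed spaces and the certification that the near-boundary modules in the residual list genuinely do, or do not, possess a regular vector.
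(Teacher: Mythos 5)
Your overall strategy---reduce $b(GV)=2$ to the existence of a regular $G$-orbit on $V$, certify regular orbits by a union bound over prime-order classes (handling scalars via eigenspaces), settle a residual list computationally, and quote \cite{FMOW} for the known non-regular cases---is exactly the skeleton of the paper's argument: its Lemma~\ref{lemma2.5} is your union bound (refined by a factor $1/(o(g)-1)$), and its techniques (T1)--(T4) are your residual-case toolkit. But there is a genuine gap: your enumeration step never makes the problem finite. The theorem quantifies over \emph{all} prime powers $r$ and \emph{all} scalar subgroups $Z \leqslant \mathbb{F}_r^\times$; for a fixed Brauer character you get a distinct triple $(G,d,r)$ for every extension of its field of definition, for every characteristic $p$ not dividing $|G|$ (where the $p$-modular representations are just the ordinary ones), and for every choice of $Z$. ``Evaluating the bound'' on each admissible triple is therefore not a terminating procedure as you describe it. The paper closes precisely this gap by packaging the union bound into the function $f(G,d,r)$ of \eqref{def-f}, whose key feature is that it depends on $G$ only through the almost simple quotient $G/Z(G)$ (so it is uniform in $Z$) and is bounded using the generation numbers $\alpha(g)$ rather than representation-specific data (Theorem~\ref{alphas} and Lemma~\ref{dimCVg}), and then proving that $f$ is increasing in both $d$ and $r$ (Lemma~\ref{f-increasing}); this monotonicity is what cuts the infinitely many $(d,r,Z)$ down to the finite list of Table~\ref{rem1}. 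Your proposal needs an analogous uniform-in-$(r,Z)$ argument---for instance, observing that, organized by eigenspaces of elements of $G/Z(G)$ of projective prime order, your sum is at most $Cr^{d-1}$ with $C$ independent of $r$ and $Z$, and computing the resulting explicit thresholds---before any case-by-case analysis can even begin.

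A secondary, practical flaw: you propose to obtain unipotent fixed-space dimensions from explicit matrix generators. For the largest groups (e.g.\ $S \cong \mathrm{B}$ or $\mathrm{M}$ in characteristic dividing $|S|$, where the minimal faithful modules have dimension in the thousands or beyond) this is infeasible, and it is exactly why the paper substitutes the theoretical bound $\dim C_V(g) \leqslant \lfloor (1-1/\alpha(g))d \rfloor$ of \cite{HLS} together with the $\alpha$-values of \cite{FMOW} (Proposition~\ref{prop1} and technique (T2)); without some such device your plan stalls on the groups for which the crude enumeration is most hopeless.
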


\begin{table}[!t]
\begin{tabular}{llr}
\toprule
$G$ & $d$ & $r$ \\
\midrule
$Z \times \text{Co}_1$ & $24$ & $4$ \\
$Z \times \text{Co}_2$ & $22$ & $4$ \\
$Z \circ (2.\text{Suz})$ & $12$ & $9$ \\
$Z \circ (3.\text{Suz})$ & $12$ & $4$ \\
$Z \circ (3.\text{J}_3)$ & $9$ & $4$ \\
$Z \circ (2.\text{J}_2)$ & $6$ & $9$ \\
$Z \circ (2.\text{J}_2)$ & $6$ & $11$ \\
$3 \times (\text{J}_2.2)$ & $12$ & $4$ \\
$Z \times \text{J}_2$ & $6$ & $4$ \\
$Z \times \text{M}_{24}$ & $11$ & $4$ \\
$7 \times \text{M}_{24}$ & $11$ & $8$ \\
\bottomrule
\end{tabular}
\quad
\begin{tabular}{llr}
\toprule
$G$ & $d$ & $r$ \\
\midrule
$Z \times \text{M}_{23}$ & $11$ & $4$ \\
$Z \circ (3.\text{M}_{22})$ & $6$ & $4$ \\
$Z \circ (3.\text{M}_{22})$ & $6$ & $16$ \\
$Z \circ (3.\text{M}_{22}.2)$ & $12$ & $4$ \\
$Z \times \text{M}_{22}$ & $10$ & $4$ \\
$Z \times (\text{M}_{22}.2)$ & $10$ & $4$ \\
$Z \circ (2.\text{M}_{12})$ & $6$ & $9$ \\
$Z \times \text{M}_{12}$ & $10$ & $4$ \\
$Z \times (\text{M}_{12}.2)$ & $10$ & $4$ \\
$Z \times \text{M}_{11}$ & $5$ & $9$ \\
$2 \times \text{M}_{11}$ & $10$ & $3$ \\
\bottomrule
\end{tabular}
\caption{The $\mathbb{F}_rG$-modules in Theorem~\ref{thmb=2}(ii). 
In the first column, $Z$ is an arbitrary subgroup of $\mathbb{F}_r^\times$, except when $G \cong Z \times \text{M}_{11}$, in which case $Z \neq 1$. 
The final entry of the table refers to the two representations of $2 \times \text{M}_{11}$ in which involutions have Brauer character value $\neq 2$.}
\label{b=2}
\end{table}

The paper is organised as follows. 
Some preliminary lemmas are collected in Section~\ref{s:prelim}, and Theorem~\ref{mainthm} is proved in Section~\ref{s:proof}. 
Theorem~\ref{thmb=2} follows from the proof of Theorem~\ref{mainthm}. 

\vspace{6pt}
\noindent {\em Acknowledgements.} 
We thank Eamonn O'Brien for several discussions, and in particular for his advice on various computations required for the proof of Theorem~\ref{mainthm}. 
We also thank Tim Burness for his helpful comments on an earlier draft of the paper.

\section{Preliminaries} \label{s:prelim}
	
Let us first fix some basic notation.  
If $G$ is a group, then $Z(G)$ denotes its centre, $o(g)$ denotes the order of $g \in G$, and $g^G$ denotes the $G$-conjugacy class of $g$. 
If $\mathbb{K}$ is a field, then $\mathbb{K}^\times$ and $\overline{\mathbb{K}}$ denote its multiplicative group and algebraic closure, respectively. 
We now collect some properties of Saxl graphs and some group-theoretic preliminaries. 

\begin{lemma}[{\cite[Lemma~2.1]{BG}}] \label{lemma2.1}
Let $G \leqslant \mathrm{Sym}(\Omega)$ be a finite transitive permutation group with point stabiliser $H$. 
Suppose that $b(G)=2$, and let $\Sigma(G)$ be the Saxl graph of $G$.
\begin{itemize}
\item[(i)] $G$ is a vertex-transitive subgroup of $\mathrm{Aut}(\Sigma(G))$.
\item[(ii)] If $G$ is primitive then $\Sigma(G)$ is connected.  
\item[(iii)] $\Sigma(G)$ has valency $c|H|$, where $c$ is the number of regular orbits of $H$ on $\Omega$.
\item[(iv)] $\Sigma(G)$ is a subgraph of $\Sigma(K)$ for every subgroup $K$ of $G$.
\end{itemize}
\end{lemma}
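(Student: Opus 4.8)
The plan is to reduce all four parts to a single elementary observation: a pair $\{\omega_1,\omega_2\}$ is a base for $G$ precisely when $G_{\omega_1}\cap G_{\omega_2}=1$, together with the conjugation formula $G_{\omega^g}=g^{-1}G_\omega g$ for $\omega\in\Omega$ and $g\in G$. Granting these two facts, each statement follows by a short invariance or orbit-counting argument.

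For part (i) I would first record that $G$ permutes the vertex set $\Omega=V(\Sigma(G))$, and then verify that each $g\in G$ preserves adjacency. Using the conjugation formula, for any pair I compute
\[
G_{\omega_1^g}\cap G_{\omega_2^g}=g^{-1}\bigl(G_{\omega_1}\cap G_{\omega_2}\bigr)g,
\]
which is trivial if and only if $G_{\omega_1}\cap G_{\omega_2}$ is trivial; hence $\{\omega_1,\omega_2\}$ is an edge if and only if $\{\omega_1^g,\omega_2^g\}$ is, so $g\in\mathrm{Aut}(\Sigma(G))$. Vertex-transitivity is then immediate from the transitivity of $G$ on $\Omega$. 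Part (iv) follows from the same dichotomy applied to a subgroup $K\leqslant G$: since $K_{\omega_i}=K\cap G_{\omega_i}$, any base $\{\omega_1,\omega_2\}$ for $G$ satisfies $K_{\omega_1}\cap K_{\omega_2}\leqslant G_{\omega_1}\cap G_{\omega_2}=1$, so it is also a base for $K$; thus every edge of $\Sigma(G)$ is an edge of $\Sigma(K)$. I would note in passing that $\Sigma(K)$ is well defined here because a base for $G$ is automatically a base for $K$, forcing $b(K)\leqslant 2$.

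For part (iii) I would fix a vertex $\omega$, set $H=G_\omega$, and identify its neighbourhood. A vertex $\omega'$ is adjacent to $\omega$ exactly when $H\cap G_{\omega'}=1$; but $H\cap G_{\omega'}$ is precisely the stabiliser in $H$ of the point $\omega'$, so the neighbours of $\omega$ are exactly the points lying in regular $H$-orbits. Each such orbit has size $|H|$, so if there are $c$ of them then $\omega$ has $c|H|$ neighbours, and by the vertex-transitivity established in (i) this is the valency of $\Sigma(G)$. For part (ii) I would use (i) to observe that $G$ permutes the connected components of $\Sigma(G)$, so these components form a $G$-invariant partition of $\Omega$; since $b(G)=2$ there is at least one edge, whence every component has at least two vertices, and primitivity of $G$ then forces the partition to be the single block $\Omega$, i.e.\ $\Sigma(G)$ is connected.

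There is no serious obstacle here: the content is entirely in the two preliminary facts of the first paragraph, and the remainder is bookkeeping. The only point demanding a little care is the conjugation identity underpinning (i), since an error in the action convention (left versus right) would propagate to the valency formula in (iii); I would therefore pin down the convention $\omega^{gh}=(\omega^g)^h$ at the outset and derive $G_{\omega^g}=g^{-1}G_\omega g$ explicitly before invoking it.
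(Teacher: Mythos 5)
Your proof is correct and follows essentially the same lines as the source: the paper itself does not reprove this lemma (it is quoted from Burness--Giudici \cite{BG}), but the argument there, and the sketch in this paper's introduction (bases map to bases under $G$, and connected components form a system of imprimitivity), is exactly your reduction to the facts $G_{\omega^g}=g^{-1}G_\omega g$ and ``$\{\omega_1,\omega_2\}$ is a base iff $G_{\omega_1}\cap G_{\omega_2}=1$''. All four parts are handled soundly, including the points needing care (using vertex-transitivity to see every component contains an edge in (ii), and identifying the neighbourhood of $\omega$ with the union of regular $H$-orbits in (iii)).
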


\begin{lemma}[{\cite[Lemma~3.6]{BG}}] \label{bg_cond}
Let $G\leqslant \mathrm{Sym}(\Omega)$ be a finite transitive permutation group with $b(G)=2$. 
If the valency of the Saxl graph $\Sigma(G)$ is at least $|\Omega|/2$, then every two vertices in $\Sigma(G)$ have a common neighbour.
\end{lemma}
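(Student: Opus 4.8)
The plan is to deduce the statement from vertex-transitivity together with a counting estimate on neighbourhoods. By Lemma~\ref{lemma2.1}(i), the transitive group $G$ acts as a vertex-transitive group of automorphisms of $\Sigma(G)$, so $\Sigma(G)$ is a regular graph; write $n=|\Omega|$ and let $k$ be its common valency, so that $k\geqslant n/2$ by hypothesis. For $\omega\in\Omega$ let $\Gamma(\omega)$ denote the set of neighbours of $\omega$, so that $|\Gamma(\omega)|=k$ and $\omega\notin\Gamma(\omega)$ (there are no loops, since a base of size $2$ consists of two distinct points). Given distinct vertices $\alpha,\beta$, their common neighbours are exactly the elements of $\Gamma(\alpha)\cap\Gamma(\beta)$, and the goal is to show this set is non-empty.

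The core step is an inclusion--exclusion estimate. Since $\Gamma(\alpha)\cup\Gamma(\beta)\subseteq\Omega$,
\[
|\Gamma(\alpha)\cap\Gamma(\beta)|=|\Gamma(\alpha)|+|\Gamma(\beta)|-|\Gamma(\alpha)\cup\Gamma(\beta)|\geqslant 2k-n .
\]
When $2k>n$ this is already positive, and $\alpha,\beta$ have a common neighbour. It remains to treat the boundary case $k=n/2$, where the estimate only yields $|\Gamma(\alpha)\cap\Gamma(\beta)|\geqslant 0$. Here I would split on adjacency. If $\alpha$ and $\beta$ are non-adjacent, then neither lies in $\Gamma(\alpha)\cup\Gamma(\beta)$ (indeed $\alpha\notin\Gamma(\alpha)$ and $\alpha\notin\Gamma(\beta)$, and symmetrically for $\beta$), so $\Gamma(\alpha)\cup\Gamma(\beta)\subseteq\Omega\setminus\{\alpha,\beta\}$ and the same computation improves to $|\Gamma(\alpha)\cap\Gamma(\beta)|\geqslant 2k-(n-2)=2$. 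Hence every non-adjacent pair has a common neighbour, and the only pairs that remain to be handled are adjacent pairs with $k=n/2$ exactly.

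The main obstacle is precisely this last case, where the naive count is tight. For an adjacent pair $\alpha\sim\beta$ with $k=n/2$, a common neighbour fails exactly when $\Gamma(\alpha)\cap\Gamma(\beta)=\varnothing$, equivalently when the two neighbourhoods partition the vertex set, $\Omega=\Gamma(\alpha)\sqcup\Gamma(\beta)$ (noting $\beta\in\Gamma(\alpha)$ and $\alpha\in\Gamma(\beta)$). I would aim to exclude this configuration using the group action rather than bare graph theory. Choosing $g\in G$ with $\alpha^g=\beta$, the identity $\Gamma(\omega^g)=\Gamma(\omega)^g$ (valid because $g\in\mathrm{Aut}(\Sigma(G))$) shows that $g$ carries $\Gamma(\alpha)$ onto $\Gamma(\beta)$ and hence interchanges the two halves of the partition; tracking the image $\alpha^{g^2}$ then pushes towards a second vertex whose neighbourhood coincides with that of $\alpha$. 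The delicate point, and the step I expect to be hardest, is to show that such a coincidence is incompatible with $\Sigma(G)$ being a Saxl graph with $b(G)=2$: since the estimate $2k-n$ vanishes exactly at $k=n/2$, closing this case requires finer information about the action than vertex-transitivity alone provides, and it is here that the substance of the argument lies.
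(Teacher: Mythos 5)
Your counting is correct as far as it goes---the inclusion--exclusion step, the strict case $2k>n$, and the improved bound $|\Gamma(\alpha)\cap\Gamma(\beta)|\geqslant 2$ for non-adjacent pairs are all fine---and you have correctly located the crux in the adjacent pairs when $k=n/2$. But your final paragraph is a plan, not a proof, and the plan cannot be carried out under the stated hypotheses. The configuration you are trying to exclude genuinely occurs for transitive groups with $b(G)=2$: take $G$ dihedral of order $8$ acting on the four vertices $\Omega=\{1,2,3,4\}$ of a square, labelled cyclically. Then $G_1=G_3=\langle(2\,4)\rangle$ and $G_2=G_4=\langle(1\,3)\rangle$, so $b(G)=2$ and $\Sigma(G)$ is the $4$-cycle with edges $\{1,2\},\{2,3\},\{3,4\},\{4,1\}$, of valency $2=|\Omega|/2$; the adjacent vertices $1$ and $2$ satisfy $\Gamma(1)=\{2,4\}$ and $\Gamma(2)=\{1,3\}$, which partition $\Omega$ with empty intersection. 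The same example defeats the specific contradiction you hoped to extract at the end: two distinct vertices with identical neighbourhoods ($\Gamma(1)=\Gamma(3)$ here) are perfectly compatible with being a Saxl graph of a transitive group with $b(G)=2$, so no ``finer information'' can close the boundary case from transitivity alone---the statement with ``transitive'' and ``$\geqslant$'' is simply false as transcribed.

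The missing ingredient is primitivity. The paper itself gives no proof (it cites \cite{BG}), and it only ever applies the lemma to the primitive groups $GV$ (primitive because $G$ acts irreducibly on $V$); in that setting your tight configuration dies by an imprimitivity argument, exactly where you stalled. Suppose $\alpha\sim\beta$ and $\Omega=\Gamma(\alpha)\sqcup\Gamma(\beta)$. Then $\Gamma(\alpha)$ is $G_\alpha$-invariant and $\Gamma(\beta)$ is $G_\beta$-invariant, and each part is the complement of the other, so the setwise stabiliser $P\leqslant G$ of the partition $\{\Gamma(\alpha),\Gamma(\beta)\}$ contains $G_\alpha$; it also contains your element $g$ with $\alpha^g=\beta$, since $\Gamma(\alpha)^g=\Gamma(\beta)$. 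As $g\notin G_\alpha$ and $G_\alpha$ is maximal in $G$ by primitivity, $P=G$, so $\{\Gamma(\alpha),\Gamma(\beta)\}$ is a $G$-invariant partition into two blocks of size $|\Omega|/2\geqslant 2$ (note $|\Omega|\geqslant 4$: the tight case forces $|\Omega|$ even, and $b(G)=2$ forces $G_\alpha\neq 1$, which is impossible in degree $2$)---a non-trivial system of imprimitivity, contradiction. Incidentally, your first display does prove the lemma for merely transitive groups under the strict hypothesis that the valency exceeds $|\Omega|/2$, and that weaker version would in fact suffice for every application in this paper, since the bounds fed into Lemmas~\ref{lemma2.5} and~\ref{f>1/2} are strict; but to prove the statement as given, primitivity (not Saxl-specific structure) is what closes your gap.
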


Given a prime power $r$, we write $V_d(r)$ for a $d$-dimensional vector space over $\mathbb{F}_r$.

\begin{lemma} \label{sum_cond}
Let $V=V_d(r)$ for some $d \geqslant 2$ and some prime power $r$, and suppose that $G\leqslant \mathrm{GL}(V)$ has a regular orbit on $V$. 
The Saxl graph $\Sigma(GV)$ of the affine group $GV$ has the property that every two vertices have a common neighbour if and only if every vector in $V$ can be written as the sum of two vectors belonging to regular orbits of $G$ on $V$. 
\end{lemma}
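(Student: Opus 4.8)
The plan is to identify $\Sigma(GV)$ with a Cayley graph on the additive group of $V$ and then translate the common-neighbour property into the stated additive condition. First I would record the structure of the affine group: writing elements of $GV$ as pairs $(g,w)$ with $g \in G$ and $w \in V$, acting by $x \mapsto g(x)+w$, the stabiliser of a point $u \in V$ is $\{(g,\,u-g(u)) : g \in G\} \cong G$. I would then compute the pointwise stabiliser of a pair $\{u_1,u_2\}$: an element $(g,w)$ fixes both points precisely when $w = u_1 - g(u_1) = u_2 - g(u_2)$, which forces $g(u_1-u_2) = u_1-u_2$. Hence the pointwise stabiliser is trivial if and only if the only $g \in G$ fixing $u_1-u_2$ is the identity; in other words, $\{u_1,u_2\}$ is an edge of $\Sigma(GV)$ if and only if $u_1-u_2$ lies in a regular orbit of $G$ on $V$.

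Writing $R = \{x \in V : G_x = 1\}$ for the set of vectors in regular orbits, I note that $R$ is nonempty by hypothesis and satisfies $R = -R$, since a linear map $g$ fixes $x$ if and only if it fixes $-x$. The previous step then shows that $\Sigma(GV)$ is exactly the Cayley graph of $(V,+)$ with connection set $R$. (In particular $b(GV)=2$: any $x \in R$ yields an edge $\{0,x\}$, while no single vertex is a base because point stabilisers are isomorphic to the nontrivial group $G$.) By Lemma~\ref{lemma2.1}(i), or simply because the translation subgroup of $GV$ acts transitively on $V$ by graph automorphisms, it suffices to decide the common-neighbour property for pairs $\{0,x\}$ with $x \in V$, taking $x = u_1 - u_2$.

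Next I would rewrite the common-neighbour condition for such a pair. A vertex $w$ is a common neighbour of $0$ and $x$ exactly when $-w \in R$ and $x-w \in R$. Setting $s = -w$ and $t = x-w$, this says $x = s+t$ with $s,t \in R$; conversely any such decomposition produces the common neighbour $w = -s$. Thus $\{0,x\}$ has a common neighbour if and only if $x$ is a sum of two elements of $R$. For $x=0$ this holds automatically, since $R$ is nonempty and $R = -R$ (take any $s \in R$ and $t = -s$), so the equivalence extends to every vector of $V$. Combining this with the reduction of the previous paragraph yields the claim: every two vertices of $\Sigma(GV)$ have a common neighbour if and only if every vector of $V$ can be written as the sum of two vectors lying in regular orbits of $G$.

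The substance of the argument lies entirely in the first step, namely the explicit description of point and pair stabilisers in $GV$ and the resulting characterisation of edges through the differences $u_1-u_2$, which exhibits $\Sigma(GV)$ as a Cayley graph. Once this is in place, the remaining steps—reduction to pairs $\{0,x\}$ via translation invariance, and the rewriting of $x$ as a sum—are routine, so I do not anticipate any genuine obstacle. The only point demanding a little care is the symmetry $R = -R$, which is used both to convert between the decompositions $x = t-s$ and $x = s+t$ and to dispose of the vector $x=0$.
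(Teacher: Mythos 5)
Your proposal is correct and follows essentially the same route as the paper's proof: both characterise edges of $\Sigma(GV)$ via the pair-stabiliser computation (so that $\{u_1,u_2\}$ is an edge exactly when $u_1-u_2$ lies in a regular $G$-orbit), both use the translations in $GV$ to reduce to pairs $\{0,x\}$, and both convert a common neighbour of $0$ and $x$ into a decomposition of $x$ as a sum of two regular-orbit vectors; your Cayley-graph packaging is just a tidier way of stating the same facts. The one blemish is a sign slip in the middle paragraph: with $s=-w$ and $t=x-w$ you get $x=t-s$ rather than $s+t$, and conversely the common neighbour attached to $x=s+t$ should be $w=s$ rather than $w=-s$ (with $w=-s$ one finds $x-w=2s+t$, which need not lie in $R$); as you yourself note at the end, the symmetry $R=-R$ repairs this immediately, so the argument stands.
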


\begin{proof}
First suppose that every vector in $V$ can be written as the sum of two vectors belonging to regular $G$-orbits. 
Since $GV$ acts transitively on $V$ and is, by Lemma~\ref{lemma2.1}(i), a vertex-transitive subgroup of $\mathrm{Aut}(\Sigma(GV))$, it suffices to prove that, for every non-zero $v \in V$, there exists a path of length~$2$ in $\Sigma(GV)$ from $v$ to $0\in V$. 
Write $v=v_1+v_2$, where $v_1$ and $v_2$ belong to regular $G$-orbits.
Then $\{v_1,v\}$ is a base for $GV$, because $\{0,v_2\}$ is a base and there exists $g \in GV$ mapping $\{v_1,v\}$ to $\{0, v_2\}$, namely the translation $g=-v_1$. 
Since $\{0,v_1\}$ is also a base, we have a path $0 \rightarrow v_1 \rightarrow w$. 

Now suppose that every two vertices in $\Sigma(GV)$ have a common neighbour. 
Let $v \in V$. 
Then there exists $v_1 \in V$ such that $\{0,v_1\}$ and $\{v_1,v\}$ are edges in $\Sigma(GV)$. 
Since the translation $-v_1$ is an automorphism of $\Sigma(GV)$, $\{0,v-v_1\}$ is also an edge. 
In particular, $v_1$ and $v-v_1$ belong to regular $G$-orbits, and $v$ is their sum. 
\end{proof}

\begin{lemma} \label{fieldext}
Let $V=V_d(r)$ for some $d \geqslant 2$ and some prime power $r$, and let $G\leqslant \mathrm{GL}(V)$. 
If $G$ has a base of size $b$ on $V \otimes \mathbb{F}_{r^i}$, then $G$ has a base of size at most $bi$ on $V$.
\end{lemma}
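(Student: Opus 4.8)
The plan is to decompose each vector of a base for $G$ on $V \otimes \mathbb{F}_{r^i}$ into its $\mathbb{F}_r$-coordinates and to argue that these coordinates collectively form a base on $V$. The essential observation is that $G$ acts $\mathbb{F}_r$-linearly, so its action on $V \otimes \mathbb{F}_{r^i} = V \otimes_{\mathbb{F}_r} \mathbb{F}_{r^i}$ is extension of scalars, fixing the second tensor factor pointwise.

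First I would fix an $\mathbb{F}_r$-basis $\alpha_1, \dots, \alpha_i$ of $\mathbb{F}_{r^i}$, so that every $w \in V \otimes \mathbb{F}_{r^i}$ has a unique decomposition $w = \sum_{j=1}^i v_j \otimes \alpha_j$ with $v_j \in V$. Since $g \cdot (v \otimes \alpha) = (gv) \otimes \alpha$ for each $g \in G$, comparing coefficients against the basis $\{\alpha_j\}$ shows that $g$ fixes $w$ precisely when it fixes every $v_j$; that is, $\mathrm{Stab}_G(w) = \bigcap_{j=1}^i \mathrm{Stab}_G(v_j)$. Next I would apply this to a base $\{w_1, \dots, w_b\}$ for $G$ on $V \otimes \mathbb{F}_{r^i}$: writing $w_k = \sum_{j=1}^i v_{k,j} \otimes \alpha_j$, triviality of the pointwise stabiliser of the $w_k$ is equivalent to triviality of $\bigcap_{k=1}^b \bigcap_{j=1}^i \mathrm{Stab}_G(v_{k,j})$, so the at most $bi$ vectors $v_{k,j}$ form a base for $G$ on $V$.

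The only step requiring care is the stabiliser identity $\mathrm{Stab}_G(w) = \bigcap_j \mathrm{Stab}_G(v_j)$, and this is where the hypothesis $G \leqslant \mathrm{GL}(V)$ over $\mathbb{F}_r$ is used crucially: it ensures that $G$ fixes the scalars $\alpha_j$, making the coordinate decomposition $G$-equivariant so that the $\mathbb{F}_r$-linear independence of the $\alpha_j$ separates the individual fixed-point conditions. I expect no genuine obstacle here; the inequality $\leqslant bi$, rather than equality, merely reflects that some of the coordinates $v_{k,j}$ may coincide or vanish.
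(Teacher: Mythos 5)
Your proof is correct and follows essentially the same route as the paper's: both decompose each vector of a base for $G$ on $V \otimes \mathbb{F}_{r^i}$ into its $\mathbb{F}_r$-coordinates with respect to a basis of $\mathbb{F}_{r^i}$ over $\mathbb{F}_r$, and conclude that the at most $bi$ coordinate vectors form a base for $G$ on $V$. Your write-up is in fact slightly more detailed, since you verify the stabiliser identity $\mathrm{Stab}_G(w) = \bigcap_j \mathrm{Stab}_G(v_j)$ in both directions, whereas the paper only invokes the one inclusion it needs.
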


\begin{proof}
Let $B=\{v_1, \dots , v_b\}$ be a base for $G$ on $V \otimes \mathbb{F}_{r^i}$. 
Let $\{\eta_1,\dots,\eta_i\}$ be a basis for $\mathbb{F}_{r^i}$ over $\mathbb{F}_r$, and write each $v_j$ as $\sum_{k=1}^i \eta_k w_{j,k}$ for $w_{j,k} \in V$. 
Then $\{w_{j,k} : 1\leqslant j \leqslant b, 1\leqslant k\leqslant i\}$ is a base for $G$ on $V$, because every element of $G$ that stabilises this set pointwise must also stabilise $B$ pointwise.
\end{proof}

Given a finite almost simple group $H$ and $h \in H \setminus \{1\}$, we define $\alpha(h)$ to be the minimal number of $\text{soc}(H)$-conjugates of $h$ needed to generate the group $\langle \text{soc}(H), h\rangle$. 
This definition extends to an almost quasisimple group $G$ as follows: consider the almost simple group $G/Z(G)$, and define $\alpha(g) = \alpha(gZ(G))$ for $g \in G \setminus Z(G)$. 

\begin{theorem}[{\cite[Theorem 1.3]{FMOW}}] \label{alphas}
Let $H$ be an almost simple group with socle a sporadic simple group, and let $h\in H\setminus\{1\}$. Then either
\begin{itemize}
\item[(i)] $\alpha(h)=2$,
\item[(ii)] $h$ is an involution and $\alpha(h)=3$, or 
\item[(iii)] $h$ belongs to one of the $H$-conjugacy classes listed in Table~$\ref{alpha_class_tab}$, and $\alpha(h) \leqslant 6$.
\end{itemize}
\end{theorem}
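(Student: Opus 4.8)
The plan is to treat $\alpha$ as a class function and reduce the theorem to a finite check over the conjugacy classes of $H$, one sporadic socle at a time. Writing $S = \text{soc}(H)$, the outer automorphism group of a sporadic simple group has order $1$ or $2$, so $H = S$ or $H = S.2$, and for $h \in H \setminus \{1\}$ the group $G_h := \langle S, h \rangle$ equals $S$ (if $h \in S$) or $H$ (otherwise). Since conjugate elements have equal $\alpha$, it suffices to determine $\alpha(h)$ for a representative $h$ of each $H$-class. I would first record the two elementary lower bounds that force the trichotomy: a single element generates a cyclic group, whereas $G_h \supseteq S$ is non-abelian, so $\alpha(h) \geqslant 2$ always; and if $h$ is an involution then any two of its conjugates generate a dihedral, hence soluble, group, which cannot be the insoluble $G_h$, so $\alpha(h) \geqslant 3$. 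These bounds explain why (i) and (ii) are the generic outcomes and why only involutions can attain the value $3$.

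For the matching upper bounds I would use probabilistic generation. Choosing $s_1, s_2 \in S$ independently and uniformly at random, put $P(h)$ for the probability that $\langle h^{s_1}, h^{s_2} \rangle = G_h$. A union bound over the maximal subgroups of $G_h$, combined with the fixed point ratios $|h^S \cap M|/|h^S|$, gives
\[
1 - P(h) \;\leqslant\; \sum_{M} |G_h : M| \left(\frac{|h^S \cap M|}{|h^S|}\right)^2,
\]
the sum running over representatives of the conjugacy classes of maximal subgroups $M$ of $G_h$. Whenever the right-hand side is strictly less than $1$ we obtain $P(h) > 0$, so a generating pair of $S$-conjugates exists and $\alpha(h) = 2$, placing $h$ in case (i). All of the ingredients — class sizes, maximal subgroups, and the class fusion needed to compute $|h^S \cap M|$ — are available for every sporadic group from the \textsc{Atlas} and the character table library in \textsf{GAP}, so this bound can be evaluated mechanically. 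For involutions I would run the analogous argument for triples of conjugates (equivalently, generate a suitable dihedral subgroup with two conjugates and adjoin a third), yielding $\alpha(h) = 3$ for all but finitely many involution classes, which is case (ii).

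The classes surviving both lower-bound arguments and the probabilistic upper bound are exactly the entries of Table~\ref{alpha_class_tab}, and for these I would compute $\alpha(h)$ exactly by machine. The required generating tuples can be exhibited directly inside a permutation or matrix representation of $G_h$ in \textsf{GAP} or \textsc{Magma}, giving the upper bound $\alpha(h) \leqslant 6$; alternatively, the structure-constant formula for the number of tuples $(x_1, \dots, x_k)$ of class representatives with prescribed product, read off from the character table and corrected for subgroup contributions, provides a purely arithmetic certificate that a generating $k$-tuple of conjugates exists. The matching lower bound $\alpha(h) \geqslant k$ is obtained by showing that every $(k-1)$-tuple of $S$-conjugates of $h$ lies in a common proper subgroup: one enumerates the maximal subgroups of $G_h$ meeting $h^S$ and verifies, using the fusion data, that their intersections with $h^S$ are too large for a smaller tuple to escape.

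The main obstacle is precisely the exceptional cases of (iii). The probabilistic bound is worthless for short conjugacy classes — elements of small order, whose centralisers are large — and in the smaller sporadic groups, where the maximal subgroups occupy a large proportion of $G_h$; here the sum of fixed point ratios exceeds $1$ and decides nothing, so everything rests on explicit computation. Establishing the exact value of $\alpha(h)$, rather than merely the bound $6$, is the delicate part, because the lower bound demands a complete understanding of which maximal subgroups of $G_h$ contain all the conjugates of $h$ that one is attempting to combine, and this subgroup-structure analysis has to be carried out separately for each entry of Table~\ref{alpha_class_tab}.
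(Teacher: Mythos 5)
First, a point of order: the paper does not prove this statement at all. It is quoted, with the attribution built into the theorem header, from Fawcett, M\"uller, O'Brien and Wilson \cite[Theorem~1.3]{FMOW}, so there is no internal proof to compare your attempt against; what matters for this paper is only the accuracy of the citation. Judged instead against how results of this kind are actually established (in \cite{FMOW}, building on earlier work of Guralnick and Saxl on generation by conjugates), your outline is broadly the right one: the two elementary lower bounds (a cyclic group cannot contain $S$, so $\alpha(h)\geqslant 2$; two involutions generate a dihedral, hence soluble, group, so $\alpha(h)\geqslant 3$ for involutions) are exactly the standard ones, and the union bound $1-P(h)\leqslant\sum_{M}|G_h:M|\bigl(|h^S\cap M|/|h^S|\bigr)^2$ over classes of maximal subgroups, evaluated from ATLAS/{\sf GAP} data, with explicit machine computation mopping up the classes where the bound is inconclusive, is the standard machinery. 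Note also that the trichotomy as stated is not claimed to be exclusive, so strictly it requires only \emph{upper} bounds: $\alpha=2$ for non-involutions off the table, $\alpha\leqslant 3$ for involutions off the table, and $\alpha\leqslant 6$ on the table.

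The one genuine gap is your method for the lower bounds $\alpha(h)\geqslant k$ in the exceptional cases (these are needed if one wants the exact values displayed in Table~\ref{alpha_class_tab}, as its caption asserts). Showing $\alpha(h)>m$ means showing that \emph{every} $m$-tuple of $S$-conjugates of $h$ generates a proper subgroup, and your proposal --- enumerate the maximal subgroups meeting $h^S$ and check that their intersections with $h^S$ are ``too large for a smaller tuple to escape'' --- is not a proof: no cardinality statement about $|h^S\cap M|$ can certify that every tuple is trapped in some proper subgroup, since large intersections are perfectly compatible with the existence of a generating tuple (which is precisely what your own probabilistic bound exploits in the opposite direction). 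The standard argument is via fixed-point spaces: if $V$ is a non-trivial irreducible module for $G_h$ and $m\cdot\operatorname{codim}C_V(h)<\dim V$, then any $m$ conjugates of $h$ have a common non-zero fixed vector, hence lie in a proper (reducible) subgroup, giving $\alpha(h)\geqslant\dim V/\operatorname{codim}C_V(h)$. This is exactly the Hall--Liebeck--Seitz inequality quoted as Lemma~\ref{dimCVg} in this paper, read in the other direction, and it is the mechanism behind entries such as $\alpha=6$ for the $3$-transposition class 2A of $\mathrm{Fi}_{22}$. Alternatively one can classify orbits on $m$-tuples of conjugates by machine, but that is a different and much heavier computation than the one you describe.
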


\begin{table}[t!]
\begin{tabular}{@{}lllll@{}}
\toprule
$\text{soc}(H)$ & $\alpha(h)=3$ & $\alpha(h)=4$ & $\alpha(h)=5$ & $\alpha(h)=6$ \\ \midrule
$\mathrm{M}_{22}$ &  & 2B &  &  \\
$\mathrm{J}_2$ & 3A & 2A &  &  \\
$\mathrm{HS}$ & 4A & 2C &  &  \\
$\mathrm{McL}$ & 3A &  &  &  \\
$\mathrm{Suz}$ &  & 3A &  &  \\
$\mathrm{Co}_2$ &  & 2A &  &  \\
$\mathrm{Co}_1$ & 3A &  &  &  \\
$\mathrm{Fi}_{22}$ & 3A, 3B & 2D &  & 2A \\
$\mathrm{Fi}_{23}$ & 3A, 3B &  &  & 2A \\
$\mathrm{Fi}_{24}'$ & 3A, 3B &  & 2C &  \\
$\mathrm{HN}$ & 4D &  &  &  \\
$\mathrm{Ly}$ & 3A &  &  &  \\
$\mathrm{B}$ &  & 2A &  &  \\ \bottomrule
\end{tabular}
\caption{Conjugacy classes and corresponding values of $\alpha(h)$ for $h \in H$ arising in Theorem~\ref{alphas}(iii). Conjugacy class names are as in the ATLAS~\cite{ATLAS}.}
\label{alpha_class_tab}
\end{table}

If $G$ is a finite almost quasisimple group, then $g \in G$ has {\em projective prime order} if $gZ(G) \in G/Z(G)$ has prime order. 
If $V=V_d(r)$ and $G \leqslant \text{GL}(V)$, then $C_V(g)$ denotes the fixed-point space of $g$, that is, $C_V(g) = \{ v \in V : vg = v \}$. 

\begin{lemma} \label{lemma2.5}
Let $V = V_d(r)$ for some $d \geqslant 2$ and some prime power $r$, and suppose that $G \leqslant \textnormal{GL}(V)$ has at most $c$ regular orbits on $V$. 
Then
\[
|V|- c|G| \leqslant \sum_{g\in X}\frac{1}{o(g)-1}|g^G||C_V(g)|,
\]
where $X$ is a set of representatives for the conjugacy classes of projective-prime-order elements in $G$. 
Moreover, 
\begin{equation} \label{2.6secondineq}
|V|- c|G| \leqslant 
\sum_{\substack{(h,k) \in \overline{X} \times \overline{\F}_r^\times \vspace{2pt} \\ \dim C_{\overline{V}}(kh) \neq 0}} 
\frac{1}{o(h)-1} |h^H| \; r^{\dim C_{\overline{V}}(kh)},
\end{equation}
where $H = G/Z(G)$, $\overline{X}$ is a set of representatives for the conjugacy classes of prime-order elements in $H$, and $\overline{V} = V \otimes \overline{\mathbb{F}}_r$. 
\end{lemma}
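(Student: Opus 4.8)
The plan is to bound from below the number of vectors lying in no regular orbit, and then to cover this set efficiently by fixed-point spaces of suitable elements. First I would observe that every regular orbit has size exactly $|G|$, so the hypothesis that there are at most $c$ of them gives at most $c|G|$ vectors in regular orbits; hence the set $M = \{v \in V : G_v \neq 1\}$ of vectors with non-trivial stabiliser satisfies $|M| \geqslant |V| - c|G|$. In the case of interest $G$ acts irreducibly, so $Z(G)$ consists of scalar matrices by Schur's lemma, whence $Z(G) \leqslant \mathbb{F}_r^\times$ acts freely on $V \setminus \{0\}$. Therefore, if $0 \neq v \in M$ then $G_v \cap Z(G) = 1$, and any prime-order element $g$ of $G_v$ is non-central, has $o(g)$ prime, and has projective prime order; moreover $g$ fixes the non-zero vector $v$, so $C_V(\langle g \rangle) \neq 0$.

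This suggests covering $M$ by the family $\mathcal{P}$ of prime-order subgroups $P \leqslant G$ with $C_V(P) \neq \{0\}$: the previous paragraph shows that every non-zero $v \in M$ lies in some $C_V(P)$ with $P \in \mathcal{P}$, and $0$ lies in each such $C_V(P)$, so $M \subseteq \bigcup_{P \in \mathcal{P}} C_V(P)$. A union bound then gives $|M| \leqslant \sum_{P \in \mathcal{P}} |C_V(P)|$. To rewrite the right-hand side as a sum over elements, I would use that a subgroup $P \in \mathcal{P}$ of order $p$ has exactly $p - 1$ generators, each a projective-prime-order element $g$ with $o(g) = p$ and $C_V(g) = C_V(P)$; summing the weight $\tfrac{1}{o(g)-1}|C_V(g)|$ over these generators returns $|C_V(P)|$. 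Since distinct subgroups in $\mathcal{P}$ share no generators, this converts $\sum_{P \in \mathcal{P}} |C_V(P)|$ into $\sum \tfrac{1}{o(g)-1}|C_V(g)|$ taken over all projective-prime-order $g$ with $C_V(g) \neq 0$; discarding the restriction $C_V(g) \neq 0$ only enlarges the sum, and grouping into conjugacy classes yields the first displayed inequality.

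For the second inequality I would work over $\overline{\mathbb{F}}_r$, using $|C_V(g)| = r^{\dim C_V(g)} = r^{\dim C_{\overline V}(g)}$, and retain the sharper form of the bound in which only $g$ with $\dim C_{\overline V}(g) \neq 0$ appear. For such $g$ one checks that $o(g) = o(h) = p$, where $h = gZ(G) \in H$: indeed $g^p \in Z(G)$ is a scalar fixing the non-zero vector fixed by $g$, which forces $g^p = 1$. I would then reorganise the sum by fixing, for each prime-order class representative $h \in \overline{X}$, a preimage $g_0 \in G$ of $h$, and summing over all preimages of $h$ in $G$. Each such preimage is a scalar multiple $z g_0$ with $z \in Z(G) \leqslant \mathbb{F}_r^\times$, and $\dim C_{\overline V}(z g_0)$ is the dimension of the $z^{-1}$-eigenspace of $g_0$. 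Enlarging the range of the scalar from $Z(G)$ to all of $\overline{\mathbb{F}}_r^\times$ only adds non-negative terms $r^{\dim C_{\overline V}(k g_0)}$, and identifying $k g_0$ with $kh$ gives the inner sum $\sum_k r^{\dim C_{\overline V}(kh)}$ over $k$ with $\dim C_{\overline V}(kh) \neq 0$. Since this quantity and $o(h)$ are constant on $H$-conjugacy classes, and the family of values $\dim C_{\overline V}(kh)$ is independent of the chosen lift of $h$, summing over $\overline{X}$ with class sizes $|h^H|$ produces exactly inequality~\eqref{2.6secondineq}.

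The routine part is the initial counting; the delicate part is the passage from the first inequality to \eqref{2.6secondineq}. The two points needing care are the switch from counting individual fixing elements to counting prime-order subgroups—which is what introduces the weight $\tfrac{1}{o(g)-1}$ and forces one to verify the generator count—and the transfer from $G$-classes of projective-prime-order elements to $H$-classes of prime-order elements decorated by a scalar $k \in \overline{\mathbb{F}}_r^\times$. For the latter I expect the main subtleties to be checking that $\dim C_{\overline V}(kh)$ is well defined independently of the lift of $h$, that enlarging the scalar range from $Z(G)$ to $\overline{\mathbb{F}}_r^\times$ preserves the inequality, and that retaining only the terms with non-zero fixed space—justified by the sharper cover using subgroups $P$ with $C_V(P) \neq \{0\}$—is precisely what licenses the restriction $\dim C_{\overline V}(kh) \neq 0$ in the stated sum.
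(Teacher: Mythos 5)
Your proof is correct and takes essentially the same route as the paper: cover the vectors with non-trivial stabiliser by fixed-point spaces, group prime-order elements into cyclic subgroups (whose $o(g)-1$ generators share a common fixed space) to obtain the weight $\frac{1}{o(g)-1}$, and then derive \eqref{2.6secondineq} by passing to $H$-classes, lifting, and enlarging the scalar range from $Z(G)$ to $\overline{\F}_r^\times$. The differences are only organisational: you make explicit the fact (implicit in the paper, and needed by it) that $Z(G)$ consists of scalars acting freely on $V \setminus \{0\}$, and you deduce \eqref{2.6secondineq} from the sharpened, restricted form of the first bound rather than re-running the covering argument with coset representatives as the paper does.
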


\begin{proof}
Let $\mathscr{O}$ be the union of the regular orbits of $G$ on $V$. 
Every element of $V \setminus \mathscr{O}$ belongs to the fixed-point space of some non-trivial element of $G \setminus Z(G)$, so
\[
V \setminus \mathscr{O} = \bigcup_{g \in G \setminus Z(G)} C_V(g), 
\quad \text{and hence} \quad 
|V|- c|G| \leqslant \sum_{g \in G\setminus Z(G)} |C_V(g)|.
\]
The first of the asserted inequalities follows because $|C_V(g')|=|C_V(g)|$ for all $g' \in g^G$ and $C_V(g)\subseteq C_V(g^i)$ for all $i < o(g)$. 
To establish the second inequality, we follow the proof of \cite[Proposition~3.1]{crosscharpaper}. 
Let $\mathcal{H} \subset G$ be a set of coset representatives of elements of prime order in $H = G/Z(G)$. 
Then every element of $V \setminus \mathscr{O}$ must be fixed by some $g \in G$ of projective prime order. 
The fixed-point space of $g$ is an eigenspace for every element of the form $gz$, where $z \in Z(G)$, so every element of $V \setminus \mathscr{O}$ belongs to some eigenspace (not necessarily the fixed-point space) of some element of $\mathcal{H}$. 
That is,
\[
V \setminus \mathscr{O} = \bigcup_{h \in \mathcal{H}} \bigcup_{z \in Z(G)} C_V(hz),
\]
and so
\[
|V|-c|G| \leqslant 
\sum_{h \in \mathcal{H}} \sum_{z \in Z(G)} |C_V(hz)| \leqslant 
\sum_{\substack{(h,k) \in \overline{X} \times \overline{\F}_r^\times \vspace{2pt} \\ \dim C_{\overline{V}}(kh) \neq 0}} 
\frac{1}{o(h)-1} |h^H| \; r^{\dim C_{\overline{V}}(kh)},
\]
as required.
\end{proof}

Recall that a partition $\lambda = (\ell_1,\dots,\ell_s)$ of a natural number $n$ is {\em weakly decreasing} if $\ell_{i+1} \leqslant \ell_i$ for all $i<s$. 
This partition {\em dominates} another weakly decreasing partition $\mu = (m_1,\dots,m_t)$ of $n$, written $\lambda \succeq \mu$, if $\sum_{j=1}^i m_j \leqslant \sum_{j=1}^i \ell_j$ for all $i$ (where we put $m_j=0$, respectively $\ell_j=0$, if $j>s$, respectively $j>t$). 

\begin{lemma} \label{karamata}
Let $\lambda$ and $\mu$ be weakly decreasing partitions of a natural number $n$, such that $\lambda \succeq \mu$. 
Then $\sum_{i \in \lambda} r^i \geqslant \sum_{i \in \mu} r^i$ for every natural number $r\geqslant 2$. 
\end{lemma}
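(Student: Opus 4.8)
The plan is to prove the inequality by reducing the majorization hypothesis $\lambda \succeq \mu$ to a sequence of elementary ``box moves'' and checking that no such move can increase the quantity $S(\nu) := \sum_{i \in \nu} r^i$, the sum being taken over the (positive) parts of a partition $\nu$. Recall the Hardy--Littlewood--P\'olya / Muirhead description of the dominance order on partitions of $n$: since $\lambda \succeq \mu$, the partition $\mu$ can be obtained from $\lambda$ by finitely many transfers, where a single transfer takes a part of some size $a$ together with a part of some smaller size $b$, with $a > b \geqslant 0$, and replaces them by parts of sizes $a-1$ and $b+1$. Here $b = 0$ is permitted and corresponds to moving a box into an empty row, thereby creating a new part. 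It therefore suffices to show that a single transfer does not increase $S$, and then to sum over the finitely many transfers carrying $\lambda$ to $\mu$.

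So the first step is to compute the effect of one transfer on $S$. When $b \geqslant 1$ and $a \geqslant 2$, both parts remain positive, and
\[
\Delta S = \big(r^{a-1} + r^{b+1}\big) - \big(r^a + r^b\big) = (r-1)\big(r^{b} - r^{a-1}\big) \leqslant 0,
\]
because $a > b$ forces $a - 1 \geqslant b$ and hence $r^{a-1} \geqslant r^{b}$ (using $r \geqslant 2 > 1$). The only remaining nontrivial case is $b = 0$ with $a \geqslant 2$, that is, a transfer creating a new part; here $S$ changes by
\[
\Delta S = \big(r^{a-1} + r\big) - r^{a} = r - r^{a-1}(r-1),
\]
and one checks that $\Delta S \leqslant 0$ precisely when $r^{a-1}(r-1) \geqslant r$, which holds for every $a \geqslant 2$ as soon as $r \geqslant 2$. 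The degenerate transfer $a = 1$, $b = 0$ merely relabels a part of size $1$ and leaves $S$ unchanged. Summing over the transfers then yields $S(\mu) \leqslant S(\lambda)$, as desired.

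The step I expect to be the main obstacle, and the genuine content of the lemma, is precisely the new-part case $b = 0$. Unlike the textbook application of Karamata's inequality to the convex function $x \mapsto r^x$, the sum $S(\nu)$ runs only over the \emph{positive} parts of $\nu$, while transfers can increase the number of parts. Applying Karamata naively, by padding both partitions with zeros to a common length, would only give $S(\lambda) - S(\mu) \geqslant (\text{number of parts of } \lambda) - (\text{number of parts of } \mu)$, which is too weak, since a dominating partition generally has \emph{fewer} parts. The new-part inequality $r^{a-1} + r \leqslant r^{a}$ is exactly where the hypothesis $r \geqslant 2$ enters, and it is sharp: for the transfer $(2) \mapsto (1,1)$ it reduces to $2r \leqslant r^{2}$, with equality at $r = 2$ and failure for $r < 2$. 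This confirms both that the result genuinely strengthens the convexity bound and that the hypothesis $r \geqslant 2$ cannot be relaxed.
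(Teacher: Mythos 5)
Your proof is correct, and it takes a genuinely different route from the paper, whose entire argument is a one-liner: the function $x \mapsto r^x$ is convex, so the result follows from Karamata's inequality (citing \cite[Theorem~12.4]{MR3015124}). You instead decompose the dominance relation into unit transfers (Hardy--Littlewood--P\'olya) and check directly that no transfer increases $S(\nu)=\sum_{i \in \nu} r^i$; both the transfer characterisation you invoke and your case analysis are sound. The comparison actually favours your approach on a point of rigour: as you observe, the lemma's sums run over \emph{positive} parts only, and since $\lambda \succeq \mu$ forces $\lambda$ to have at most as many parts as $\mu$, zero-padding the sequences and applying Karamata to $x \mapsto r^x$ yields only
$\sum_{i \in \lambda} r^i + (t-s) \geqslant \sum_{i \in \mu} r^i$,
where $s$ and $t$ are the numbers of parts of $\lambda$ and $\mu$; this is strictly weaker than the statement whenever $s<t$, so the paper's proof as literally written is incomplete in exactly that case, and your case $b=0$ (where $r \geqslant 2$ enters via $r^{a-1}+r \leqslant r^a$) is precisely what fills the gap. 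It is worth noting that the Karamata route can be repaired rather than abandoned: apply Karamata not to $x \mapsto r^x$ but to the piecewise-linear convex function $f$ on $[0,\infty)$ interpolating $f(k)=r^k$ at integers $k \geqslant 1$ together with $f(0)=2r-r^2$ (convexity of the interpolation requires exactly $f(0) \geqslant 2r-r^2$); since $2r-r^2 \leqslant 0$ for $r \geqslant 2$, the padded inequality $\sum_{i \in \lambda} r^i + (t-s)f(0) \geqslant \sum_{i \in \mu} r^i$ then implies the lemma. Either way, the essential content is the same fact your write-up makes explicit and the paper's hides: creating an extra part is affordable only because $r \geqslant 2$, as your sharp example $(2) \mapsto (1,1)$ shows.
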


\begin{proof}
The function on $\mathbb{R}$ given by $x \mapsto r^x$ is convex, so the result follows from Karamata's inequality \cite[Theorem~12.4]{MR3015124}.
\end{proof}

\begin{lemma}[{\cite[p.~454]{HLS}}] \label{dimCVg}
Let $V=V_d(r)$, where $d \geqslant 2$ and $r$ is a prime power, and let $G$ be an almost quasisimple subgroup of $\mathrm{GL}(V)$. 
If $g \in G$ has projective prime order, then 
\[
\dim C_V(g) \leqslant \lfloor (1-1/\alpha(g))d \rfloor.
\]
\end{lemma}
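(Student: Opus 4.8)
The plan is to use the standard codimension-counting argument built around the invariant $\alpha(g)$. Write $\alpha = \alpha(g)$ and $H = G/Z(G)$, and let $S \trianglelefteq G$ denote the quasisimple subnormal subgroup, so that $\operatorname{soc}(H) = SZ(G)/Z(G)$. By the definition of $\alpha$, there exist $s_1,\dots,s_\alpha \in S$ for which the conjugates $g_i = s_i^{-1} g s_i$ satisfy $\langle g_1 Z(G),\dots,g_\alpha Z(G)\rangle = \langle \operatorname{soc}(H), gZ(G)\rangle$ in $H$. Each $g_i$ is $G$-conjugate to $g$, so $\dim C_V(g_i) = \dim C_V(g)$. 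Setting $W = \langle g_1,\dots,g_\alpha\rangle$, we have $\bigcap_{i=1}^\alpha C_V(g_i) = C_V(W)$, and the subadditivity of codimension under intersection gives
\[
d - \dim C_V(W) \;\leqslant\; \sum_{i=1}^\alpha \bigl(d - \dim C_V(g_i)\bigr) \;=\; \alpha\bigl(d - \dim C_V(g)\bigr).
\]
If one can show $C_V(W) = 0$, then rearranging yields $\dim C_V(g) \leqslant (1-1/\alpha)d$, and the floor bound follows since $\dim C_V(g)$ is an integer.

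The crux, and the step I expect to be the main obstacle, is verifying that $C_V(W) = 0$; the subtlety is that $V$ need not be $S$-irreducible, so one cannot simply invoke Clifford theory to exclude a proper nonzero fixed space. I would proceed from two observations: (a) the subspace $C_V(W)$ is invariant under $WZ(G)$, because $Z(G)$ is central in $G$ and hence commutes with each $g_i$; and (b) by construction $WZ(G)/Z(G) = \langle \operatorname{soc}(H), gZ(G)\rangle$, so that $S \leqslant WZ(G)$. For $v \in C_V(W)$ and any $m = wz$ with $w \in W$ and $z \in Z(G)$, one computes $mv = w(zv) = zv$, using that $zv \in C_V(W)$ by (a) and that $W$ acts trivially on $C_V(W)$. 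Hence the restriction homomorphism $WZ(G) \to \mathrm{GL}(C_V(W))$ has image equal to that of the abelian group $Z(G)$. Since $S \leqslant WZ(G)$ is quasisimple and therefore perfect, its image in this abelian group is trivial, so $S$ fixes $C_V(W)$ pointwise; that is, $C_V(W) \subseteq C_V(S)$.

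Finally, I would argue that $C_V(S) = 0$. Because $S$ is normal in $G$, the fixed space $C_V(S)$ is a $G$-submodule of $V$; as $V$ is irreducible and $S \neq 1$ acts nontrivially by faithfulness, this submodule is proper and hence zero. Combining the two steps gives $C_V(W) \subseteq C_V(S) = 0$, and the displayed inequality then completes the proof. The only genuinely delicate point is the perfectness reduction in the previous paragraph, which is what allows one to dispense with any irreducibility hypothesis on $S$; the remaining ingredients are routine linear algebra together with the normality of the quasisimple subnormal subgroup $S$ in the almost quasisimple group $G$.
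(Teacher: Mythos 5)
There is no internal proof to compare against: the paper states this lemma as a quotation from \cite[p.~454]{HLS} and gives no argument of its own. What you have written is precisely the argument underlying that citation---pass to $\alpha(g)$ socle-conjugates $g_1,\dots,g_\alpha$ of $g$ whose images generate $\langle\mathrm{soc}(H),gZ(G)\rangle$, apply subadditivity of codimension to $C_V(W)=\bigcap_i C_V(g_i)$, and prove $C_V(W)=0$---and it is correct. In particular your treatment of the delicate step is right: $WZ(G)$ acts on $C_V(W)$ through the abelian image of $Z(G)$, so the perfect group $S\leqslant WZ(G)$ acts trivially there, giving $C_V(W)\subseteq C_V(S)$. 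The one thing you must flag is that your final step assumes $V$ is an irreducible $G$-module, a hypothesis absent from the lemma as printed; without it the statement is actually false: if $V=V_1\oplus V_0$ with $V_1$ faithful irreducible and $V_0$ trivial, then $\dim C_V(g)\geqslant\dim V_0$, which exceeds $(1-1/\alpha(g))\dim V$ as soon as $\dim V_0>(\alpha(g)-1)\dim V_1$. This is a slip in the paper's formulation (inherited by Lemma~\ref{lemma2.8} and the function $f$ in \eqref{def-f}) rather than a gap in your proof, since everywhere the lemma is applied, $V$ is one of the faithful irreducible modules of Theorem~\ref{mainthm}; still, you should either add irreducibility to the hypotheses or record explicitly that all your argument needs is $C_V(S)=0$.
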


\begin{lemma} \label{lemma2.8}
Let $V=V_d(r)$, where $d \geqslant 2$ and $r$ is a prime power, and write $\overline{V} = V \otimes \overline{\mathbb{F}}_r$. 
If $G \leqslant \mathrm{GL}(V)$ is almost quasisimple and $g \in G$ has projective prime order, then
\[
\sum_{\mathclap{\substack{k \in \overline{\F}_r^\times \vspace{2pt} \\ \dim C_{\overline{V}}(kg) \neq 0}}} 
r^{\dim C_{\overline{V}}(kg)} 
\leqslant r^{\floor{(1-1/\alpha(g))d}}+r^{\ceil{d/\alpha(g)}}.
\]
\end{lemma}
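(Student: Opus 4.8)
The plan is to reinterpret the left-hand side as a sum over the eigenvalues of $g$ on $\overline V$, bound each eigenspace dimension by Lemma~\ref{dimCVg}, and then convert the resulting constrained sum of powers of $r$ into an application of Lemma~\ref{karamata}.

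First I would rewrite the summands. Writing the action on the right, a vector $v$ lies in $C_{\overline V}(kg)$ precisely when $k(vg)=v$, that is, when $vg=k^{-1}v$; thus $C_{\overline V}(kg)$ is the eigenspace of $g$ on $\overline V$ for the eigenvalue $k^{-1}$. As $k$ ranges over the scalars $\overline{\F}_r^\times$ with $\dim C_{\overline V}(kg)\neq 0$, the value $k^{-1}$ ranges over the distinct eigenvalues $\lambda$ of $g$, and $\dim C_{\overline V}(kg)$ equals the geometric multiplicity $m_\lambda:=\dim\ker(g-\lambda)$. Hence the sum equals $\sum_\lambda r^{m_\lambda}$, taken over the distinct eigenvalues $\lambda$ of $g$. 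Since eigenspaces for distinct eigenvalues are linearly independent we have $\sum_\lambda m_\lambda \leqslant d$, and I claim that each $m_\lambda \leqslant M:=\floor{(1-1/\alpha(g))d}$.

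For the bound on a single $m_\lambda$, note that $m_\lambda=\dim C_{\overline V}(\lambda^{-1}g)$, and choose a finite subfield $\F_{r^i}\subseteq\overline{\F}_r$ containing $\lambda$. Inside $\mathrm{GL}_d(\F_{r^i})$ I would work with the group $G^+=\langle G,\,\lambda^{-1}I\rangle$: adjoining the central scalar $\lambda^{-1}I$ leaves the unique quasisimple subnormal subgroup unchanged, so $G^+$ is almost quasisimple, and since $\lambda^{-1}I$ is central one has $\alpha(\lambda^{-1}g)=\alpha(g)$. Applying Lemma~\ref{dimCVg} to $\lambda^{-1}g\in G^+$ over $\F_{r^i}$, and using that the fixed-space dimension is unchanged by field extension, yields $m_\lambda\leqslant\floor{(1-1/\alpha(g))d}=M$, as claimed. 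Note also that $\alpha(g)\geqslant 2$, since $\mathrm{soc}(H)$ is nonabelian and $gZ(G)\neq 1$, whence $M\geqslant\floor{d/2}\geqslant 1$.

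It then remains to maximise $\sum_\lambda r^{m_\lambda}$ over all ways of writing a number $n\leqslant d$ as a sum of parts $m_\lambda$ with $1\leqslant m_\lambda\leqslant M$. The crucial observation is the identity $M=\floor{d-d/\alpha(g)}=d-\ceil{d/\alpha(g)}=d-c$, where $c:=\ceil{d/\alpha(g)}$ is exactly the second exponent on the right-hand side. Padding a partition with parts equal to $1$ only increases the sum, so I may assume $n=d$, and then Lemma~\ref{karamata} reduces everything to the majorisation-dominant partition of $d$ with all parts at most $M=d-c$. When $2c\leqslant d$ this partition is $(d-c,c)$, giving exactly $r^{d-c}+r^c$; the only remaining case is $2c=d+1$, where $d$ is odd and $c=M+1$, the dominant partition is $(M,M,1)=(c-1,c-1,1)$, and Lemma~\ref{karamata} gives the bound $2r^{c-1}+r$, which is at most $r^{c-1}+r^c$ because $r\leqslant r^{c-1}(r-1)$ for $c\geqslant 2$ and $r\geqslant 2$.

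I expect the main obstacle to be the middle step: one must legitimately invoke Lemma~\ref{dimCVg}—a statement about $C_V(g)$ for $g\in G$ over the finite field $\F_r$—for the scalar multiple $\lambda^{-1}g$, which need not lie in $G$, and over $\overline{\F}_r$. The device of passing to $G^+=\langle G,\lambda^{-1}I\rangle$ over a finite subfield, together with the invariance $\alpha(\lambda^{-1}g)=\alpha(g)$ under multiplication by central scalars, is what makes this rigorous; checking that $\alpha$ is genuinely unchanged—equivalently, that the isomorphism $G^+/Z(G^+)\cong G/Z(G)$ carries $\lambda^{-1}g$ to $g$—is the delicate point, and it rests on $Z(G)$ consisting of scalars. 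The concluding optimisation is routine once the identity $M=d-c$ is noticed.
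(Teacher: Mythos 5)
Your proof is correct and follows essentially the same route as the paper: identify the summands with the eigenspace dimensions of $g$ on $\overline{V}$, bound the parts via Lemma~\ref{dimCVg} together with the invariance of $\alpha$ under multiplication by scalars, and finish with Karamata's inequality (Lemma~\ref{karamata}) against the two-part partition $(\lfloor(1-1/\alpha(g))d\rfloor,\lceil d/\alpha(g)\rceil)$. The points where you diverge --- the device $G^+=\langle G,\lambda^{-1}I\rangle$ over a finite subfield to legitimise applying Lemma~\ref{dimCVg} to $\lambda^{-1}g\notin G$, padding with parts equal to $1$ when the eigenspace dimensions sum to less than $d$, and handling the case $\alpha(g)=2$ with $d$ odd via $(M,M,1)$ where the paper simply swaps the two parts of $\mu$ --- are refinements of details the paper leaves implicit, not a different argument.
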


\begin{proof}
Consider the partition $\lambda = (a_1,\dots,a_s)$ of $d$ obtained by taking $a_1,\dots,a_s$ to be the dimensions of the eigenspaces of $g$ acting on $\overline{V}$, arranged in weakly decreasing order. 
Each of these eigenspaces is equal to $C_{\overline{V}}(kg)$ for some $k \in \overline{\mathbb{F}}_r^\times$, so 
\[
\sum_{\mathclap{\substack{k \in \overline{\F}_r^\times \vspace{2pt} \\ \dim C_{\overline{V}}(kg) \neq 0}}} 
r^{\dim C_{\overline{V}}(kg)} 
= \sum_{i \in \lambda} r^i.
\] 
Since $\alpha(g) = \alpha(kg)$ for all $k \in \overline{\F}_r^\times$, Lemma~\ref{dimCVg} implies that $a_1 \leqslant \lfloor (1-1/\alpha(g))d \rfloor$. 
Now consider the partition $\mu = (\lfloor (1-1/\alpha(g))d \rfloor, \lceil d/\alpha(g) \rceil)$ of $d$. 
Then $\mu \succeq \lambda$ except when $\alpha(g)=2$ and $d$ is odd, in which case $\mu \succeq \lambda$ if we redefine $\mu$ by swapping its two parts. 
In either case, Lemma~\ref{karamata} implies the asserted inequality. 
\end{proof}

Given a finite group $H$, write $i_k(H)$ for the number of elements of order $k$ in $H$, and $i_P(H)$ for the number of elements of prime order in $H$. 
Given a finite almost quasisimple group $G$, an integer $d \geqslant 2$, and a prime power $r$, define
\begin{equation} \label{def-f}
f(G,d,r)=1- \frac{\frac{1}{2}\; i_P(G/Z)\Big(r^{\lfloor(1-1/\alpha)d\rfloor}+r^{\lceil d/\alpha \rceil}\Big)+i_2(G/Z)\Big(r^{\lfloor(1-1/\alpha_2)d\rfloor}+r^{\lceil d/\alpha_2 \rceil}\Big)}{r^d},
\end{equation}
where we write $Z = Z(G)$ for brevity, and 
\begin{align*}
\alpha & := \max\{ \alpha(g) : g \in G\setminus Z \text{ and } gZ \in i_P(G/Z) \setminus i_2(G/Z) \}, \\
\alpha_2 & := \max\{ \alpha(g) : g \in G\setminus Z \text{ and } gZ \in i_2(G/Z)\}.
\end{align*}

\begin{lemma} \label{f>1/2}
Let $V=V_d(r)$, where $d \geqslant 2$ and $r$ is a prime power. 
Let $G$ be an almost quasisimple subgroup of $\textnormal{GL}(V)$, and define the function $f(G,d,r)$ as in \eqref{def-f}.
\begin{itemize}
\item[(i)] If $G$ has at most $c$ regular orbits on $V$, then $f(G,d,r) < c|G|/|V|$. 
\item[(ii)] If $f(G,d,r)\geqslant 1/2$, then every two vertices in $\Sigma(GV)$ have a common neighbour. 
\end{itemize}
\end{lemma}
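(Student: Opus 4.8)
The plan is to derive (i) by feeding the two-term eigenspace bound of Lemma~\ref{lemma2.8} into the counting inequality \eqref{2.6secondineq} of Lemma~\ref{lemma2.5}, and then to obtain (ii) from (i) by way of the valency criterion in Lemma~\ref{bg_cond}.

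For (i), I would begin with the inequality \eqref{2.6secondineq}, writing $H=G/Z$, and factor its right-hand side as
\[
\sum_{h\in\overline X}\frac{|h^H|}{o(h)-1}\sum_{\substack{k\in\overline{\F}_r^\times\\ \dim C_{\overline V}(kh)\neq 0}} r^{\dim C_{\overline V}(kh)},
\]
which is legitimate because $|h^H|/(o(h)-1)$ does not depend on $k$. Applying Lemma~\ref{lemma2.8} bounds each inner sum by $r^{\lfloor(1-1/\alpha(h))d\rfloor}+r^{\lceil d/\alpha(h)\rceil}$. I would then split the outer sum over $\overline X$ according to whether $h$ is an involution: for a prime-order non-involution, $o(h)\geqslant 3$ gives $1/(o(h)-1)\leqslant\tfrac12$, while for an involution $1/(o(h)-1)=1$. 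Summing the class sizes gives at most $i_P(H)$ non-involutions (more precisely $i_P(H)-i_2(H)$) and exactly $i_2(H)$ involutions, and bounding $\alpha(h)$ above by $\alpha$ on the non-involution classes and by $\alpha_2$ on the involution classes leaves the right-hand side equal to the numerator of \eqref{def-f}. Writing this numerator as $r^d\bigl(1-f(G,d,r)\bigr)$ and recalling $|V|=r^d$, the chain reduces to $|V|-c|G|\leqslant r^d\bigl(1-f(G,d,r)\bigr)$, which rearranges to $f(G,d,r)\leqslant c|G|/|V|$. Strictness is forced by the fact that the zero vector lies in every fixed-point space and is therefore overcounted in the union bounding $V\setminus\mathscr O$ inside Lemma~\ref{lemma2.5}.

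The one genuinely non-routine point is the replacement of the class-dependent value $\alpha(h)$ by the uniform maxima $\alpha$ and $\alpha_2$: this is valid only if $\beta\mapsto r^{\lfloor(1-1/\beta)d\rfloor}+r^{\lceil d/\beta\rceil}$ is non-decreasing for integers $\beta\geqslant 2$. As in the proof of Lemma~\ref{lemma2.8}, the two exponents form a partition of $d$ into two parts; increasing $\beta$ enlarges the larger part at the expense of the smaller, so after sorting into weakly decreasing order the partition for the larger $\beta$ dominates that for the smaller one, and Lemma~\ref{karamata} then yields the monotonicity. The boundary case $\beta=2$ with $d$ odd, where the two parts must be swapped before comparing, needs a little care, exactly as in Lemma~\ref{lemma2.8}.

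For (ii), suppose $f(G,d,r)\geqslant\tfrac12$ and let $c'$ denote the exact number of regular $G$-orbits on $V$. Applying part~(i) with $c=c'$ gives $f(G,d,r)<c'|G|/|V|$, so $c'|G|/|V|>\tfrac12$; in particular $c'\geqslant 1$, so $G$ has a regular orbit, whence $b(GV)=2$ and $\Sigma(GV)$ is defined. By Lemma~\ref{lemma2.1}(iii) the valency of $\Sigma(GV)$ equals $c'|G|>|V|/2$, and Lemma~\ref{bg_cond} then guarantees that every two vertices of $\Sigma(GV)$ have a common neighbour.
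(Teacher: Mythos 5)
Your proposal is correct and follows essentially the same route as the paper's own proof: part (i) by feeding Lemma~\ref{lemma2.8} into \eqref{2.6secondineq}, splitting the classes in $\overline{X}$ into involutions and non-involutions and replacing $\alpha(h)$ by the maxima $\alpha$, $\alpha_2$, and part (ii) from part (i) together with Lemmas~\ref{lemma2.1}(iii) and~\ref{bg_cond} (the paper phrases (ii) as a contrapositive, you argue directly; this is immaterial). The only cosmetic divergence is the source of strictness in (i): the paper obtains it by enlarging $i_P(H)-i_2(H)$ to $i_P(H)$ (strict since $i_2(H)\geqslant 1$ for an almost simple group), whereas you obtain it from the overcounting of the zero vector in the union inside Lemma~\ref{lemma2.5}; both are valid, and your explicit Karamata justification of the monotonicity in $\alpha$ makes precise a step the paper leaves implicit.
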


\begin{proof}
Assertion~(i) follows from Lemma~\ref{lemma2.8} and the second inequality in Lemma~\ref{lemma2.5}, namely \eqref{2.6secondineq}. 
Explicitly, let $H$ denote the almost simple group $G/Z$, and let $\overline{X}_2$ be the subset of elements of order $2$ in the set $\overline{X}$ defined in Lemma~\ref{lemma2.5}. 
Then
\begin{align*}
|V| - c|G| &\leqslant 
\sum_{\substack{(h,k) \in \overline{X} \times \overline{\F}_r^\times \vspace{2pt} \\ \dim C_{\overline{V}}(kh) \neq 0}} 
\frac{1}{o(h)-1} |h^H| \; r^{\dim C_{\overline{V}}(kh)} \\
&\leqslant \sum_{h \in \overline{X}} \frac{1}{o(h)-1} |h^H| \Big(r^{\lfloor(1-1/\alpha(h))d\rfloor}+r^{\lceil d/\alpha(h) \rceil}\Big) \\
&\leqslant \sum_{h \in \overline{X}\setminus\overline{X}_2} \frac{1}{2} |h^H| \Big(r^{\lfloor(1-1/\alpha)d\rfloor}+r^{\lceil d/\alpha \rceil}\Big) + \sum_{h \in \overline{X}_2} |h^H| \Big(r^{\lfloor(1-1/\alpha_2)d\rfloor}+r^{\lceil d/\alpha_2 \rceil}\Big) \\
&= \frac{1}{2} (i_P(H) - i_2(H))\Big(r^{\lfloor(1-1/\alpha)d\rfloor}+r^{\lceil d/\alpha \rceil}\Big) + i_2(H)\Big(r^{\lfloor(1-1/\alpha_2)d\rfloor}+r^{\lceil d/\alpha_2 \rceil}\Big) \\
&< \frac{1}{2} i_P(H)\Big(r^{\lfloor(1-1/\alpha)d\rfloor}+r^{\lceil d/\alpha \rceil}\Big) + i_2(H)\Big(r^{\lfloor(1-1/\alpha_2)d\rfloor}+r^{\lceil d/\alpha_2 \rceil}\Big).
\end{align*}  
That is, $|V| - c|G| < |V| - f(G,d,r)|V|$, so $f(G,d,r) < c|G|/|V|$ as claimed. 
We prove the contrapositive of (ii). 
If $\Sigma(GV)$ does not have the property that every two vertices have a common neighbour, then Lemma~\ref{bg_cond} implies that the valency of $\Sigma(GV)$ is less than $|\Omega|/2=|V|/2$. 
Lemma~\ref{lemma2.1}(iii) says that the valency of $\Sigma(GV)$ is $m|G|$, where $m$ is the exact number of regular orbits of $G$ on $V$, so we must have $m < |V|/(2|G|)$. 
Assertion~(i) therefore implies that $f(G,d,r) < m|G|/|V| < 1/2$. 
\end{proof}

We also make the following observation.

\begin{lemma} \label{f-increasing}
Given a fixed finite almost quasisimple group $G$, the function $f(G,d,r)$ defined in \eqref{def-f} is increasing in both $d$ and $r$. 
\end{lemma}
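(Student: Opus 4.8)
The plan is to exploit the fact that, for a fixed group $G$, the quantities $i_P(G/Z)$, $i_2(G/Z)$, $\alpha$ and $\alpha_2$ in the defining formula \eqref{def-f} are constants, independent of $d$ and $r$. Writing $f(G,d,r) = 1 - N(d,r)/r^d$, the numerator $N(d,r)$ is a fixed nonnegative linear combination of powers $r^{e(d)}$ whose exponents $e(d)$ range over $\lfloor(1-1/\beta)d\rfloor$ and $\lceil d/\beta\rceil$ for $\beta \in \{\alpha,\alpha_2\}$ (with the $\alpha_2$-terms simply absent when $i_2(G/Z)=0$). Thus it suffices to show that each term $T(d,r) = r^{e(d)}/r^d = r^{e(d)-d}$ is non-increasing in $d$ and in $r$: since $f = 1 - \sum (\text{nonnegative constant})\,T(d,r)$, this makes $f$ non-decreasing in each variable, and non-decreasing in each variable separately gives the joint conclusion, namely $f(G,d,r)\leqslant f(G,d',r')$ whenever $d\leqslant d'$ and $r\leqslant r'$.

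For the dependence on $r$ I would first record that $\alpha\geqslant 2$ and $\alpha_2\geqslant 2$ by Theorem~\ref{alphas}. A short estimate then shows that every relevant exponent satisfies $e(d)-d\leqslant -1$ for $d\geqslant 2$: for the floor exponent, $\lfloor(1-1/\beta)d\rfloor$ is an integer strictly below $d$ because $(1-1/\beta)d<d$; for the ceiling exponent, $\lceil d/\beta\rceil \leqslant \lceil d/2\rceil \leqslant d-1$. With the exponent strictly negative, $r\mapsto r^{e(d)-d}$ is decreasing on the positive reals, hence in particular on the prime powers, so each $T(d,r)$ decreases in $r$ and $f$ increases in $r$.

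The $d$-direction is the more delicate step, since here the exponents themselves change with $d$ through floors and ceilings. Fixing $r\geqslant 2$ and using that $x\mapsto r^x$ is increasing, it is enough to prove that the integer exponent $e(d)-d$ is non-increasing in $d$, that is, $e(d+1)-e(d)\leqslant 1$. I would invoke the elementary observation that if a real function increases by strictly less than $1$ between consecutive integers, then its floor (respectively ceiling) increases by at most $1$. Applying this with the increment $1-1/\beta \in (0,1)$ for the floor exponent and the increment $1/\beta\in(0,\tfrac12]$ for the ceiling exponent yields $e(d+1)-e(d)\leqslant 1$ in both cases, whence $e(d+1)-(d+1)\leqslant e(d)-d$ and $T(d+1,r)\leqslant T(d,r)$.

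The only genuine obstacle is controlling these floor/ceiling increments cleanly; everything else is bookkeeping. Once that monotonicity is established, combining the two directions completes the proof. (In fact the dependence on $r$ is strict, because $i_P(G/Z)>0$ for the almost simple group $G/Z$, though only the non-strict statement is needed later.)
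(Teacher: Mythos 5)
Your proposal is correct and matches the paper's own proof in essentially every detail: both reduce to showing each term $(r^{\lfloor(1-1/\beta)d\rfloor}+r^{\lceil d/\beta\rceil})/r^d$ is non-increasing, handle the $d$-direction by noting the floor and ceiling exponents increase by at most $1$ when $d$ increases by $1$, and handle the $r$-direction by noting both exponents are strictly less than $d$. The only differences are cosmetic (your explicit bound $e(d)-d\leqslant -1$ versus the paper's ``strictly less than $d$'').
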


\begin{proof}
Since $G$ is fixed, so too are $i_P(G/Z)$, $i_2(G/Z)$, $\alpha$ and $\alpha_2$. 
Since $\alpha \geqslant \alpha_2 \geqslant 2$, it suffices to check that $(r^{\lfloor(1-1/a)d\rfloor}+r^{\lceil d/a \rceil})/r^d$ is a decreasing function of $d$ and $r$ for every fixed $a \geqslant 2$. 
This function is decreasing in $d$ because $\lfloor(1-1/a)(d+1)\rfloor \leqslant \lfloor(1-1/a)d\rfloor + 1$ and $\lceil (d+1)/a \rceil \leqslant \lceil d/a \rceil + 1$, i.e.
\[
\frac{r^{\lfloor(1-1/a)(d+1)\rfloor} + r^{\lceil (d+1)/a \rceil}}{r^{d+1}} \leqslant \frac{r^{\lfloor(1-1/a)d\rfloor} + r^{\lceil d/a \rceil}}{r^d}.
\]
It is decreasing in $r$ because $d$ is strictly greater than both of $\lfloor(1-1/a)d\rfloor$ and $\lceil d/a \rceil$. 
\end{proof}

\section{Proof of Theorem~\ref{mainthm}} \label{s:proof}

We now freely use the notation of the statement of Theorem~\ref{mainthm}, and that established in Section~\ref{s:prelim}. 
Further, we write $r_0$ for the characteristic of the field $\mathbb{F}_r$ underlying $V$. 
The {\em layer} of an almost quasisimple group $G$, namely its unique quasisimple subnormal subgroup, is denoted by $E(G)$, and we write
\[
S = \text{soc}(G/Z(G)) \cong E(G)/Z(E(G)).
\] 
The names of the sporadic simple groups $S$ are as in the ATLAS~\cite{ATLAS}. 

\begin{proposition} \label{prop1}
Theorem~$\ref{mainthm}$ holds if $S \cong \mathrm{M}, \mathrm{B}, \mathrm{Th},\mathrm{Ly}, \mathrm{HN}, \mathrm{Fi}_{23}, \mathrm{Fi}_{24}' \text{ or } \mathrm{O'N}$. 
\end{proposition}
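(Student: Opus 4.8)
The plan is to reduce Conjecture~\ref{BGconj} (in the form of Theorem~\ref{mainthm}) for these eight large sporadic groups to a single numerical inequality, namely $f(G,d,r) \geqslant 1/2$, which by Lemma~\ref{f>1/2}(ii) immediately yields the desired common-neighbour property. The strategy exploits the fact that the groups $\mathrm{M}, \mathrm{B}, \mathrm{Th}, \mathrm{Ly}, \mathrm{HN}, \mathrm{Fi}_{23}, \mathrm{Fi}_{24}', \mathrm{O'N}$ are precisely those sporadic groups whose smallest faithful irreducible representations have very large dimension $d$, so the function $f(G,d,r)$, which measures how close the valency of $\Sigma(GV)$ is to $|V|/2$, should be comfortably above $1/2$ once $d$ is large enough.

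**First I would** fix such an $S$ and enumerate the possible almost quasisimple groups $G$ with $\mathrm{soc}(G/Z(G)) = S$ together with the admissible pairs $(d,r)$. Here I would use the ATLAS~\cite{ATLAS} and modular representation data to record, for each relevant layer $E(G)$ and each characteristic $r_0$, the minimal dimension $d$ of a faithful irreducible module. Because these eight groups have no exceptional small representations, the key point is that $d$ is bounded below by a large number in every case (for instance $d \geqslant 196883$-type bounds for the Monster, and correspondingly large minima for the others). I would then invoke Theorem~\ref{alphas} to pin down the values $\alpha$ and $\alpha_2$ entering the definition \eqref{def-f} of $f$: for all of these $S$, Theorem~\ref{alphas} guarantees $\alpha(h) \leqslant 6$ for every non-identity $h$, with $\alpha = 3$ for non-involutions and $\alpha_2 \leqslant 3$ for involutions except in the handful of classes recorded in Table~\ref{alpha_class_tab}, where $\alpha$ can rise as high as $6$.

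**Next**, using Lemma~\ref{f-increasing}, which tells us that $f(G,d,r)$ is increasing in both $d$ and $r$, I would only need to check the inequality $f(G,d,r) \geqslant 1/2$ at the \emph{smallest} admissible $d$ and $r=r_0$ for each $G$. Substituting the largest relevant values $\alpha = \alpha_2 = 6$ (to obtain a worst-case lower bound on $f$) together with the counts $i_P(G/Z)$ and $i_2(G/Z)$, read off from the ATLAS, I would verify that the dominant term $r^d$ in the denominator of \eqref{def-f} dwarfs the numerator, so that $f(G,d,r)$ exceeds $1/2$. Concretely, since $\lfloor (1-1/6)d\rfloor = \lfloor 5d/6\rfloor$ is bounded well below $d$, the ratio $(r^{\lfloor 5d/6\rfloor}+r^{\lceil d/6\rceil})/r^d$ decays like $r^{-d/6}$, which is astronomically small for the dimensions in play, easily absorbing the polynomially-bounded class-size factors $i_P$ and $i_2$.

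**The main obstacle** I anticipate is not mathematical subtlety but bookkeeping: I must confirm that \emph{every} admissible $(G,d,r)$ for these eight socles — across all covering groups, all field extensions, and all characteristics — genuinely has $d$ large enough that the crude bound $f \geqslant 1/2$ holds with $\alpha=\alpha_2=6$. The delicate cases will be those where $d$ is at its minimum and $r=2$, since the decay rate $r^{-d/6}$ is slowest for the smallest field; I would pay particular attention to the groups with the smallest minimal faithful dimensions among the eight (likely $\mathrm{HN}$ and $\mathrm{O'N}$), where I would compute $f$ explicitly rather than rely on the asymptotic heuristic. If any borderline case fails the worst-case $\alpha=6$ estimate, I would refine it by using the actual, typically smaller, value of $\alpha$ for the relevant involution classes from Table~\ref{alpha_class_tab}, which sharpens the bound considerably. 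I expect no case in this proposition to require the more elaborate techniques (T1)--(T4) mentioned in the introduction, since the whole point of isolating these eight large-dimensional socles is that the elementary bound of Lemma~\ref{f>1/2}(ii) suffices.
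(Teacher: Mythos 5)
You take essentially the same route as the paper: reduce the proposition to the single inequality $f(G,d_0,r_0)\geqslant 1/2$, which gives the common-neighbour property by Lemma~\ref{f>1/2}(ii); restrict attention to minimal degrees $d_0$ and minimal fields using the monotonicity in Lemma~\ref{f-increasing}; and compute $i_P$, $i_2$ from character-table data and $\alpha$, $\alpha_2$ from Theorem~\ref{alphas} (the paper sources the minimal degrees from Jansen \cite{Jansen} rather than the ATLAS, a cosmetic difference). One concrete warning, though: your primary shortcut of substituting the worst case $\alpha=\alpha_2=6$ fails outright, not just marginally, in exactly the low-dimensional cases you flag as delicate --- for example, for $S\cong\mathrm{Ly}$ ($d_0=111$, $r=5$) the elements of order $67$ alone give $i_P(S)>2\cdot 10^{15}$, so $\tfrac{1}{2}\,i_P(S)\,r^{\lfloor 5d_0/6\rfloor}$ exceeds $r^{d_0}/2$ by roughly two orders of magnitude, and the same failure occurs for $S\cong\mathrm{HN}$ ($d_0=132$, $r_0=2$) and for $E(G)\cong 3.\mathrm{O'N}$ ($d_0=45$, $r=7$). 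In all of these cases the exceptional classes of Table~\ref{alpha_class_tab} have order $3$ or $4$, so in the prime-order-based definition \eqref{def-f} the true values are $\alpha\leqslant 3$ and $\alpha_2\leqslant 3$, and $f\geqslant 1/2$ then holds comfortably; in other words, your ``refinement'' fallback --- using the actual values of $\alpha$ and $\alpha_2$ for each group, which is precisely what the paper does via Theorem~\ref{alphas} --- is the mandatory step rather than an exceptional fix, and once you make it your argument coincides with the paper's proof.
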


\begin{proof}
For each $G$ such that $S$ is one of the listed sporadic simple groups, the minimal degree $d_0$ of a non-trivial faithful irreducible module for $G$ in characteristic $r_0>0$ is given in \cite[Table~1]{Jansen}. 
We check that $f(G,d_0,r_0) \geqslant 1/2$ in each case, using the relevant Brauer character tables given in {\sf GAP}~\cite{GAPbc,GAP} to determine $i_P(G/Z(G))$ and $i_2(G/Z(G))$, and Theorem~\ref{alphas} to determine $\alpha(G)$ and $\alpha_2(G)$. 
The result then follows from Lemma~\ref{f>1/2}(ii). 
Note that, by Lemma~\ref{f-increasing}, it suffices to check that $f(G,d_0,r_0) \geqslant 1/2$ either for the minimal $d_0$ arising for a given $r_0$, or the minimal $r_0$ arising for a given $d_0$. 
\end{proof}

\begin{proposition} \label{prop2}
Under the hypothesis of Theorem~$\ref{mainthm}$, either $b(GV)=2$ and $\Sigma(GV)$ has the property that every two vertices have a common neighbour, or $V = V_d(r)$, where $r \leqslant R$ and the triple $(E(G),d,R)$ appears in Table~$\ref{rem1}$. 
\end{proposition}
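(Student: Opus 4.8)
The plan is to push the criterion of Lemma~\ref{f>1/2}(ii) as far as it will go, using the monotonicity recorded in Lemma~\ref{f-increasing} to cut the remaining work down to a finite computation. The starting point is the observation that $f(G,d,r) \geqslant 1/2$ already delivers the full conclusion: if $f(G,d,r) \geqslant 1/2$, then applying Lemma~\ref{f>1/2}(i) with $c$ equal to the exact number $m$ of regular $G$-orbits on $V$ gives $m|G|/|V| > f(G,d,r) \geqslant 1/2 > 0$, forcing $m \geqslant 1$ and hence $b(GV)=2$; Lemma~\ref{f>1/2}(ii) then supplies the common-neighbour property. So it suffices to locate, for each $G$, the faithful irreducible modules $V = V_d(r)$ with $f(G,d,r) < 1/2$, and to confirm that every such module is accounted for by a row of Table~\ref{rem1}.

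First I would fix an almost quasisimple $G$ with $S = \mathrm{soc}(G/Z(G))$ sporadic and compute the four invariants entering \eqref{def-f}. The cardinalities $i_P(G/Z)$ and $i_2(G/Z)$ are read off the (Brauer) character table of $H = G/Z(G)$ in {\sf GAP}, while $\alpha$ and $\alpha_2$ come from Theorem~\ref{alphas}: outside the exceptional classes of Table~\ref{alpha_class_tab} one has $\alpha(h)=2$ for $h$ of odd prime order and $\alpha(h)=3$ for an involution, so $\alpha$ and $\alpha_2$ are the corresponding maxima, with due care for the exceptional classes (which can raise $\alpha_2$ to as much as $6$, for instance when $S = \mathrm{Fi}_{22}$). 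Once these constants are fixed, $f(G,d,r)$ is a function of $d$ and $r$ alone.

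Next I would exploit the limiting behaviour of $f$. Because the exponent $d$ strictly exceeds both $\lfloor(1-1/a)d\rfloor$ and $\lceil d/a \rceil$, each term $(r^{\lfloor(1-1/a)d\rfloor}+r^{\lceil d/a \rceil})/r^d$ tends to $0$ as $d \to \infty$ or $r \to \infty$, so $f(G,d,r) \to 1$; together with Lemma~\ref{f-increasing} this confines the set of $(d,r)$ with $f(G,d,r) < 1/2$ to a finite ``lower-left corner''. In practice I would, for each prime $r_0$, list the faithful irreducible Brauer character degrees $d$ of the layer $E(G)$ (anchored by the minimal degree from \cite{Jansen}); for an absolutely irreducible module $d$ is fixed as the field $\mathbb{F}_r$ ranges over powers of $r_0$, so monotonicity in $r$ produces a single threshold $R = R(E(G),d)$, namely the largest prime power with $f(G,d,r) < 1/2$. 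Any module realised only over a proper subfield has strictly larger $d$ and hence larger $f$, so it is handled a fortiori. Taking, where necessary, the largest such threshold over the finitely many $G$ sharing a given layer, the surviving triples $(E(G),d,R)$ are exactly those to be entered into Table~\ref{rem1}; for the eight sporadics of Proposition~\ref{prop1} the corner is empty.

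The principal obstacle is the breadth of the case analysis rather than any single hard step: one must run over every almost quasisimple $G$ with sporadic $S$, in every characteristic admitting a faithful irreducible module, and correctly determine $\alpha$ and $\alpha_2$ in each instance. The chief source of error is the set of exceptional classes in Theorem~\ref{alphas}, which inflate $\alpha_2$ and thereby enlarge the bad corner; a second, bookkeeping difficulty is to match the pairs $(d,r)$ swept up by $f(G,d,r) < 1/2$ against the dimensions genuinely realised by faithful irreducible $\mathbb{F}_r G$-modules, so that Table~\ref{rem1} is neither incomplete nor padded with vacuous rows.
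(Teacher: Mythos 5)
Your proposal is correct and takes essentially the same route as the paper: its proof of Proposition~\ref{prop2} likewise fixes the invariants in \eqref{def-f} from the Brauer character tables and Theorem~\ref{alphas}, consults the known lists of low-dimensional irreducible representations (Hiss--Malle \cite{HM}, anchored by \cite{Jansen}), computes for each realised dimension $d$ the largest $R$ with $f(G,d,R) < 1/2$, and uses Lemma~\ref{f-increasing} together with Lemma~\ref{f>1/2} to dispose of all $r > R$ and of all larger $d$ once $R < 2$. Your explicit observation that $f(G,d,r) \geqslant 1/2$ already forces $b(GV)=2$ via Lemma~\ref{f>1/2}(i) is a point the paper leaves implicit, and is a welcome clarification.
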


\begin{proof}
It remains to consider the cases where $S$ is not one of the groups in Proposition~\ref{prop1}. 
We begin as for Proposition~\ref{prop1}, but now it is not always the case that $f(G,d_0,r_0) \geqslant 1/2$. 
We consult Hiss and Malle \cite{HM}, which gives the dimensions $d$ of the irreducible representations of quasisimple groups with $d \leqslant 250$, and the relevant Brauer character tables in {\sf GAP}~\cite{GAPbc,GAP}. 
For each $d$ such that there exists an irreducible $d$-dimensional representation of $G$, we calculate the largest integer $R$ such that $f(G,d,R) < 1/2$. 
Since $f$ is increasing with respect to $r$ (Lemma~\ref{f-increasing}), it remains to consider only the prime powers $r \leqslant R$. 
Similarly, once $R<2$ for a given $d$, we need not consider any~larger~$d$.  
\end{proof}

\begin{table}[t!]
\begin{tabular}{@{}ll@{}}
	\toprule
	$E(G)$ & $(d,R)$ \\
	\midrule
	$\text{M}_{11}$ & $(5,63), (9,9), (10,6), (11,5), (16,3), (24,2)$ \\							
	$\text{M}_{12}$ & $(10,9), (11,9), (15,5), (16,4), (20,3), (22,2), (29,2), (30,2), (32,2), (34,2)$ \\	
	$2.\text{M}_{12}$ & $(6,66), (10,9), (12,8), (24,2), (32,2)$ \\							
	$\text{M}_{22}$ & $(10,19), (20,5), (21,4), (34,2), (42,2), (45,2)$ \\						
	$2.\text{M}_{22}$ & $(10,19), (28,3)$ \\											
	$3.\text{M}_{22}$ & $(6,107), (12,18), (15,8), (21,4), (30,2), (42,2), (45,2)$ \\				
	$4.\text{M}_{22}$ & $(16,8), (32,2)$ \\											
	$6.\text{M}_{22}$ & $(36,2)$ \\													
	$12.\text{M}_{22}$ & $(24,4), (48,2)$ \\											
	$\text{M}_{23}$ & $(11,22), (21,4), (22,4), (44,2), (45,2)$ \\							
	$\text{M}_{24}$ & $(11,36), (22,5), (23,5), (44,2), (45,2)$ \\							
	$\text{J}_1$ & $(7,45), (14,6), (20,3), (22,3), (27,2), (31,2), (34,2)$ \\						
	$\text{J}_2$ & $(6,381), (12,24), (13,11), (14,11), (21,5), (26,4), (28,3), (36,2), (42,2)$ \\		
	$2.\text{J}_2$ & $(6,381), (12,24), (14,11), (28,3), (36,2), (41,2), (50,2)$ \\					
	$\text{J}_3$ & $(18,7), (36,2)$ \\												
	$3.\text{J}_3$ & $(9,68), (18,7), (36,2)$ \\											
	$\text{J}_4$ & $(112,2)$ \\														
	$\text{HS}$ & $(20,9), (21,6), (22,6), (49,2), (55,2), (56,2)$ \\							
	$2.\text{HS}$ & $(28,5), (56,2)$ \\												
	$\text{McL}$ & $(21,16), (22,11)$ \\												
	$3.\text{McL}$ & $(45,3)$ \\													
	$\text{He}$ & $(50,2), (51,2)$ \\													
	$\text{Ru}$ & $(28,5)$ \\														
	$2.\text{Ru}$ & $(28,5)$ \\														
	$\text{Suz}$ & $(64,4), (78,3), (110,2), (142,2)$ \\									
	$2.\text{Suz}$ & $(12,4890), (24,69)$ \\											
	$3.\text{Suz}$ & $(12,4890), (24,69), (66,4), (78,3), (132,2)$ \\							
	$6.\text{Suz}$ & $(12,4890), (24,69)$ \\											
	$\text{Co}_3$ & $(22,10), (23,10)$ \\												
	$\text{Co}_2$ & $(22,20), (23,19)$ \\												
	$\text{Co}_1$ & $(24,161)$ \\													
	$2.\text{Co}_1$ & $(24,161)$ \\													
	$\textrm{Fi}_{22}$ & $(77,4), (78,4)$ \\ 											
	$\textrm{3.Fi}_{22}$ & $(27,43)$, $(54,8)$ \\ 										
	\bottomrule
\end{tabular}
\caption{Data for Proposition~\ref{prop2}. 
}
\label{rem1}
\end{table}

Proposition~\ref{prop2} gives us a list of remaining cases to check in order to complete the proof of Theorem~\ref{mainthm}. 
That is, for each $G$ such that $E(G)$ appears in the first column of Table~\ref{rem1}, it remains to consider the faithful irreducible $\mathbb{F}_rG$-modules with $r \leqslant R$, for each pair $(d,R)$ given in the second column. 
As explained in the proof of the proposition, $R$ is simply the largest integer such that $f(G,d,R) < 1/2$, so several values of $r \leqslant R$ are readily ruled out either because (i) they are not prime powers, (ii) the $d$-dimensional representation in question is not actually realised over $\mathbb{F}_r$ (as one may verify using either the relevant character table in {\sf GAP}~\cite{GAPbc,GAP} or by referring to Hiss and Malle \cite{HM}), or (iii) the representation is realised but $b(GV) \neq 2$, i.e. $G$ has no regular orbit on $V$. 
We note these exclusions below on a case-by-case basis, rather than listing them in Table~\ref{rem1}. 

Disregarding the cases listed in Table~\ref{todo}, we then check that for each remaining triple $(E(G),d,r)$, each corresponding Saxl graph $\Sigma(GV)$ has the property that 
\begin{itemize}
\item[($*$)] every two vertices in $\Sigma(GV)$ have a common neighbour. 
\end{itemize}
(We label this property ($*$) for reference.) 
This is done using one of the four techniques (T1)--(T4) described below. 
Let $m$ denote the (exact) number of regular orbits of $G$ on $V$. 
\begin{itemize}
\item[(T1)] We construct the matrix group $G \leqslant \text{GL}(V)$ in {\sc Magma}~\cite{Magma}, usually using one of the functions \texttt{AbsolutelyIrreducibleModules} or \texttt{QuasisimpleMatrixGroup}. 
We then find a set of conjugacy class representatives of the elements of projective prime order in $G$, determine the dimensions of their eigenspaces, and calculate the sum on the right-hand side of the second inequality in Lemma~\ref{lemma2.5}, namely \eqref{2.6secondineq}. 
For the cases in which we apply this technique, this sum turns out to be less than $|V|/2$, so Lemma~\ref{lemma2.5} implies that $m \geqslant |V|/(2|G|)$. 
(If not, then putting $c = |V|/(2|G|)$ in \eqref{2.6secondineq} yields a contradiction, cf. the proof of Lemma~\ref{f>1/2}(ii).)
It then follows from Lemmas~\ref{lemma2.1}(iii) and~\ref{bg_cond} that $\Sigma(GV)$ has property~($*$). 
(Note that, for fixed $G$, $d$ and characteristic $r_0$, if this technique implies ($*$) for some $r=r_0^i$, then it also implies ($*$) for every $r=r_0^j$ with $j>i$. 
Indeed, if $\sigma(r)$ denotes the right-hand side of \eqref{2.6secondineq}, then $\sigma(r_0^j) < (r_0^j/r_0^i)^d\sigma(r_0^i)$, so if $\sigma(r_0^i) < (r_0^i)^d/2$ then $\sigma(r_0^j) < (r_0^j)^d/2$.)
\item[(T2)] This technique is a weaker variant of (T1), which we use for representations that are especially large or otherwise difficult to construct in {\sc Magma}~\cite{Magma}. 
The idea is again to check that the sum on the right-hand side of \eqref{2.6secondineq} is less than $|V|/2$, so that Lemmas~\ref{lemma2.1}(iii), \ref{bg_cond} and \ref{lemma2.5} together imply ($*$). 
However, here we calculate only an upper bound for this sum. 
More specifically, we calculate the dimensions of the fixed-point spaces of semisimple elements of projective prime order exactly, by using the Brauer character tables for $G$. 
However, for unipotent elements of projective prime order, we instead use upper bounds on the dimensions of the fixed-point spaces, obtained via Theorem~\ref{alphas} and Lemma~\ref{dimCVg}. 
\item[(T3)] We construct $G$ in {\sc Magma}~\cite{Magma} as in (T1), and verify that $m \geqslant |V|/(2|G|)$ by either (i) enumerating all of the orbits of $G$ on $V$, or (ii) finding sufficiently many regular orbits by random selection. 
Lemmas~\ref{lemma2.1}(iii) and~\ref{bg_cond} then imply ($*$). 
\item[(T4)] Here we instead deduce ($*$) from Lemma~\ref{sum_cond}. 
We construct $G$ in {\sc Magma}~\cite{Magma}, find (by random selection) a `small' set $R$ of vectors belonging to regular $G$-orbits, and check that every vector in $V$ can be expressed as a sum of two vectors in $R$. 
\end{itemize}

\begin{remark}
The {\sc Magma}~\cite{Magma} code that we used to implement techniques (T1)--(T4) is available on the first author's website~\cite{code}. 
We note that, in a few computationally intensive cases, we used {\em ad hoc} modifications of the code documented there. 
\end{remark}

We now proceed with the analysis of the cases in Table~\ref{rem1}, treating each remaining candidate for $S$ in turn (beginning at the bottom of the table). 
Except where indicated, each given argument applies to all representations associated with the triple $(E(G),d,r)$ being considered. 
(There are several instances where a given triple $(E(G),d,r)$ gives rise to two or more distinct representations, but we do not bring attention to these details unless there is a reason to do so, for example, when a certain technique (T1)--(T4) applies to one representation but not another, as, for instance, when $E(G) \cong 3.\text{J}_3$ with $d=18$.)  

Recall that $r_0$ denotes the characteristic of the field $\mathbb{F}_r$ underlying $V$.

\subsection{$S\cong\text{Fi}_{22}$}
According to Table~\ref{rem1}, it remains to consider the faithful irreducible $\mathbb{F}_rG$-modules of dimension $d$, where $G$ is a finite almost quasisimple group and either (i) $E(G)\cong\textrm{Fi}_{22}$, $d=77$ and $r \leqslant 4$, (ii) $E(G)\cong\textrm{Fi}_{22}$, $d=78$ and $r \leqslant 4$, (iii) $E(G)\cong 3.\textrm{Fi}_{22}$, $d=27$ and $r \leqslant 43$, or (iv) $E(G)\cong 3.\textrm{Fi}_{22}$, $d=54$ and $r \leqslant 8$. 
The representations in cases (i) and (ii) are realised only in characteristic $r_0=2$ or $3$, respectively. 
In each case, we use technique (T1) to show that property ($*$) holds. 
The representations in case (iii) arise only for $r$ an even power of $2$, so we need only consider $r \in \{4,16\}$.
We use (T1) to show that ($*$) holds for $r=16$. 
If $r=4$ then the corresponding Brauer character table shows that the only possibility for $G$ is $3.\text{Fi}_{22}$. 
This is the first case listed in Table~\ref{todo}; in other words, it remains open. 
Finally, the representations in case (iv) arise only for $r_0=2$, and are only irreducible if $G/Z(G) \cong \text{Fi}_{22}.2$ or $r$ is an odd power of $2$. 
We use (T1) to show that ($*$) holds for $r \in \{4,8\}$. 
The case $(E(G),d,r)=(3.\textrm{Fi}_{22},54,2)$ remains open, but here we must consider both $G \cong 3.\textrm{Fi}_{22}$ and $G \cong 3.\textrm{Fi}_{22}.2$, as indicated in Table~\ref{todo}.

\subsection{$S\cong\text{Co}_1$} 
It remains to consider the faithful irreducible $\mathbb{F}_rG$-modules of dimension $d=24$, where $G$ is a finite almost quasisimple group with $E(G) \cong \text{Co}_1$ or $2.\text{Co}_1$, and $r \leqslant 161$. 
If $E(G) \cong \text{Co}_1$ then the representations in question arise only in characteristic $r_0=2$, so it remains to consider $r \in \{2,4,\dots,128\}$. 
If $r \in \{2,4\}$ then $G$ has no regular orbit on $V$ because $|G| \geqslant |\text{Co}_1| > |V| \geqslant 4^{24}$, so $b(GV) \neq 2$. 
This information is contained in Theorem~\ref{thmb=2}. 
Explicitly, $r=2$ belongs to case (i) of the theorem, having been considered previously by Fawcett et~al.~\cite{FMOW}, while $r=4$ belongs to case (ii) and is recorded in Table~\ref{b=2}. 
For $r \in \{16,32,64,128\}$, we use technique (T2) to show that $\Sigma(GV)$ has property~($*$). 
The case $(E(G),d,r)=(\text{Co}_1,24,8)$ remains open, and is accordingly listed in Table~\ref{todo}. 

If $E(G) \cong 2.\text{Co}_1$ then the representations in question arise only for $r_0 \neq 2$. 
If $r \in \{3,5\}$ then $G$ has no regular orbit on $V$ because $|G| > |V|$, and if $r=7$ then \cite[Theorem~1.1]{FMOW} tells us that $G$ has no regular orbit on $V$. 
Again, this information is contained in Theorem~\ref{thmb=2}. 
For $r \geqslant 11$, we use (T2) to show that $\Sigma(GV)$ has property~($*$). 
The case $(E(G),d,r)=(2.\text{Co}_1,24,9)$ remains open (and appears in Table~\ref{todo}).

\subsection{$S\cong\text{Co}_2$} 
It remains to consider $E(G)\cong\text{Co}_2$ with $d \in \{22,23\}$ and $r \leqslant 19$. 
The representations with $d=22$ arise only in characteristic $r_0=2$. 
If $r \in \{2,4\}$ then $G$ has no regular orbit on $V$ because $|G|>|V|$. 
For $r \in \{8,16\}$, we use (T1) to show that $\Sigma(GV)$ has property~($*$). 
The representations with $d=23$ arise only for $r_0 \neq 2$. 
If $r = 3$ then $G$ has no regular orbit on $V$ because $|G|>|V|$. 
For $r \in \{5,7,9,11,13,17,19\}$, we use (T2) to show that $\Sigma(GV)$ has property~($*$).

\subsection{$S\cong\text{Co}_3$} 
It remains to consider $E(G)\cong \text{Co}_3$ with $r \leqslant 10$ and $d \in \{22,23\}$. 
The $22$-dimensional representations arise only for $r_0 \in \{2,3\}$. 
If $r \in \{2,3\}$ then $G$ has no regular orbit on $V$ because $|G|>|V|$. 
For $r \in \{8,9\}$, we use (T2) to show that $\Sigma(GV)$ has property~($*$). 
The case $(E(G),d,r)=(\text{Co}_3,22,4)$ remains open. 
The $23$-dimensional representations arise only for $r_0 \not \in \{2,3\}$. 
For $r = 7$, we use (T2) to establish property~($*$). 
The case $(E(G),d,r)=(\text{Co}_3,23,5)$ remains open.

\subsection{$S\cong\text{Suz}$} 

First consider $E(G) \cong \text{Suz}$. 
The $64$- and $78$-dimensional representations arise only for $r_0=3$, so it remains to consider $r=3$, in which case we use (T2) to establish ($*$), for both $d=64$ and $d=78$. 
The $110$-dimensional representations do not arise for $r=2$. 
For $(d,r)=(142,2)$, we again use (T2) to establish ($*$). 

The $24$-dimensional representations with $E(G) \cong 6.\text{Suz}$ arise only for $r_0 \geqslant 5$, and we use (T2) to establish ($*$) for each admissible $r \leqslant R = 69$. 
The $12$-dimensional representations with $E(G) \cong 6.\text{Suz}$ also arise only for $r_0 \geqslant 5$, but the Brauer character contains a third root of unity, so $r \equiv 1 \pmod 3$. 
If $r=7$ then $G$ has no regular orbit on $V$ because $|G|>|V|$, and if $r=13$ then \cite[Theorem~1.1]{FMOW} tells us that $G$ has no regular orbit on $V$. 
The case $(E(G),d,r)=(6.\text{Suz},12,19)$ remains open. 
Note that this representation does not extend to $6.\text{Suz}.2$, so there is just one entry in Table~\ref{todo} arising from this case. 
For the remaining admissible values of $r \leqslant R = 4890$, we use (T1) to establish ($*$). 

Now consider $E(G) \cong 2.\text{Suz}$. 
The $24$-dimensional representations arise only for $r_0=3$. 
If $r=3$ then $G$ has no regular orbit on $V$ because $|G|>|V|$. 
For $r \in \{9,27\}$, we use (T2) to establish ($*$). 
The $12$-dimensional representations also arise only for $r_0=3$. 
If $r \in \{3,9\}$ then $G$ has no regular orbit on $V$ because $|G|>|V|$. 
For $r \in \{27,81,243,729,2187\}$, we use (T1) to establish ($*$). 

Finally, consider $E(G) \cong 3.\text{Suz}$. 
For $(d,r)=(132,2)$, we use (T2) to establish ($*$). 
The $78$-dimensional representations arise only for $r_0 \notin \{2,3\}$. 
The $66$-dimensional representations arise only for $r_0 \neq 3$, and an irrationality in the Brauer character implies that $r \neq 2$. 
For $(d,r)=(66,4)$, we use (T2) to establish ($*$). 
The $24$-dimensional representations arise only for $r_0=2$. 
If $r=2$ then $G$ has no regular orbit on $V$ because $|G|>|V|$. 
The case $(E(G),d,r)=(3.\text{Suz},24,4)$ remains open. 
Note that the only possibility for $G$ in this case is  $3.\text{Suz}.2$, as per Table~\ref{todo}. 
For $r \in \{8,16,32,64\}$, we use (T1) to establish ($*$). 
The $12$-dimensional representations arise only for $r$ an even power of $2$. 
If $r=4$ then $G$ has no regular orbit on $V$ because $|G|>|V|$. 
For $r \in \{64,256,1024,4096\}$, we use (T1) to establish ($*$).  
The case $(E(G),d,r)=(3.\text{Suz},12,16)$ remains open. 
This representation does not extend to $3.\text{Suz}.2$, as reflected in Table~\ref{todo}.

\subsection{$S\cong\text{Ru}$} 
It remains to consider $E(G)\cong \text{Ru}$ or $2.\text{Ru}$ with $d=28$ and $r \leqslant 5$. 
For $E(G)\cong\text{Ru}$, the representations in question arise only for $r_0=2$. 
If $r=2$ then $G$ has no regular orbit on $V$ because $|G|>|V|$. 
For $r=4$, we use (T1) to establish ($*$). 
If $E(G)\cong 2.\text{Ru}$ then $r_0 \neq 2$, and the corresponding Brauer character contains a fourth root of unity. 
It therefore remains to consider $r=5$, in which case we use (T1) to establish~($*$).

\subsection{$S\cong\text{He}$}
Here we have $E(G)\cong\text{He}$ with $r=2$ and $d \in \{50,51\}$. 
The $50$-dimensional representations arise only for $r_0=7$, so it remains to consider only $(d,r)=(51,2)$, in which case we use (T1) to establish ($*$).

\subsection{$S\cong\text{McL}$}
The $45$-dimensional representations with $E(G)\cong 3.\text{McL}$ arise only for $r_0=5$, so there is nothing left to consider in this case. 
The $22$-dimensional representations with $E(G)\cong\text{McL}$ arise only for $r_0 \not \in \{3,5\}$. 
If $r=2$ then $G$ has no regular orbit on $V$ because $|G|>|V|$. 
For $r \in \{4,7,8,11\}$, we use (T1) to establish ($*$). 
The $21$-dimensional representations with $E(G)\cong\text{McL}$ arise only for $r_0 \in \{3,5\}$. 
If $r=3$ then $G$ has no regular orbit on $V$ by \cite[Theorem~1.1]{FMOW}. 
For $r \in \{5,9\}$, we use (T1) to establish ($*$).

\subsection{$S\cong\text{HS}$}
First consider the cases with $E(G)\cong 2.\text{HS}$. 
The $56$-dimensional representations do not arise for $r=2$, and the $28$-dimensional representations arise only for $r_0=5$, so for $d=56$ there is nothing left to consider, and for $d=28$ it remains to consider only $r=5$, in which case we use (T1) to establish ($*$). 
Now suppose that $E(G) \cong \text{HS}$. 
For $(d,r)=(56,2)$, we use (T1) to establish ($*$). 
The $49$- and $55$-dimensional representations do not arise for $r=2$. 
The $22$-dimensional representations arise only for $r_0 \not \in \{2,5\}$, so it remains to consider only $r=3$, in which case we establish ($*$) using (T4).
The $21$-dimensional representations arise only for $r_0=5$, and we establish ($*$) using (T1) for $r=5$. 
Finally, the $20$-dimensional representations arise only for $r_0=2$. 
If $r=2$ then $G$ has no regular orbit on $V$ because $|G|>|V|$. 
For $r \in \{4,8\}$, we use (T1) to establish~($*$).

\subsection{$S\cong \text{J}_4$} 
We use (T2) to establish ($*$) for $E(G) \cong \text{J}_4$ with $(d,r)=(112,2)$.

\subsection{$S\cong \text{J}_3$} 
Both the $18$- and the $36$-dimensional representations with $E(G)\cong\text{J}_3$ arise only for $r_0=3$, and for $d=18$ an irrationality in the Brauer character implies that $r$ must be an even power of $3$, so there is nothing left to consider in either case. 
Now suppose that $E(G)\cong3.\text{J}_3$. 
For $(d,r)=(36,2)$, we use (T2) to establish ($*$). 
When $d=18$, there are up to three distinct representations for each $r \leqslant 7$: two are conjugate under the action of an outer involution, and the third arises from fusing two $9$-dimensional representations by an outer involution. 
For the two conjugate representations, the Brauer character contains a third root of unity, and when $r_0=7$ there is a further irrationality implying that $r \neq 7$, so we are left to consider only $r=4$. 
We use (T1) to establish ($*$) in this case. 
The fused representation arises only when $r_0=2$. 
If $r=2$ then $G$ has no regular orbit on $V$ because $|G|>|V|$. 
For $r=4$, we use (T4) to establish ($*$). 
Finally, the $9$-dimensional representations arise only for $r$ an even power of $2$, so it remains to consider $r \in \{4,16,64\}$. 
If $r=4$ then $G$ has no regular orbit on $V$ because $|G|>|V|$. 
For $r=16$, we use (T4) to establish ($*$). 
For $r=64$, we use (T2).

\subsection{$S\cong\text{J}_2$} 
First consider $E(G)\cong 2.\text{J}_2$. 
The representations with $d \in \{36,41,50\}$ do not arise when $r_0=2$. 
Similarly, the $28$-dimensional representations do not arise when $r_0 \in \{2,3\}$. 
The $14$-dimensional representations arise only in odd characteristic. 
If $r=3$ then $G$ has no regular orbit on $V$ by \cite[Theorem~1.1]{FMOW}. 
For $r \in \{7,9,11\}$, we use (T1) to establish ($*$). 
For $r=5$, we use (T3). 
The $12$-dimensional representations arise only for $r_0 \not \in \{2,5\}$. 
If $r=3$ then $G$ has no regular orbit on $V$ because $|G|>|V|$. 
For $r \in \{7,9,11,13,17,19,23\}$, we use (T2) to establish ($*$). 
The $6$-dimensional representations arise only in odd characteristic. 
For each (odd) $r_0 \neq 5$, there is an irrationality in the Brauer character which precludes various values of $r$ (for example, if $r_0=3$ then $r$ must be an even power of $3$). 
For $r_0=5$, the Brauer character of $2.\text{J}_2.2$ includes an additional irrationality, so the representation of $2.\text{J}_2$ extends to include outer automorphisms only if $r$ is an even power of $5$. 
In either case, for the admissible values of $r$ with $31 \leqslant r \leqslant R = 381$, we use (T1) to establish ($*$). 
This leaves $r \in \{5,9,11,19,25,29\}$. 
If $r \in \{5,9\}$ then $G$ has no regular orbit on $V$ because $|G|>|V|$. 
If $r=11$ then $G$ has no regular orbit on $V$ by \cite[Theorem~2.2]{KP}.
Note that we record this case in part~(b) of Theorem~\ref{thmb=2}, i.e. in Table~\ref{b=2}, given that it was not considered by Fawcett et~al.~\cite{FMOW} because $11$ does not divide $|\text{J}_2|$. 
For $r \in \{19,25\}$, we use (T4) to establish ($*$). 
For $r=29$, we use (T3). 

Now consider $E(G)\cong \text{J}_2$. 
The $42$-dimensional representations do not arise for $r_0=2$. 
For $(d,r)=(36,2)$, we use (T1) to establish ($*$). 
The $28$-dimensional representations arise only for $r_0 \neq 3$, and we use (T4) to establish ($*$) when $r=2$. 
The $26$-dimensional representations arise only for $r$ an even power of $3$. 
The $21$-dimensional representations arise only in odd characteristic, but not for $r=3$. 
For $(d,r)=(21,5)$, we use (T1) to establish ($*$). 
The $14$-dimensional representations arise only for $r_0 \neq 3$, and additional irrationalities in the Brauer character leave only $r \in \{4,5,11\}$ to consider. 
For $r=4$, we use (T3) to establish ($*$). 
For $r \in \{5,11\}$, we use (T1). 
The $13$-dimensional representations arise only for $r$ an even power of $3$, so we are left to consider $r=9$, in which case we use (T1) to establish ($*$). 
The $12$-dimensional representations arise only for $r_0=2$. 
If $r=2$ then $G$ has no regular orbit on $V$ because $|G|>|V|$. 
For $r \in \{8,16\}$, we use (T1) to establish ($*$). 
If $r=4$ then $G$ has a regular orbit on $V$ only if $|Z(G)|=1$, namely $G \cong \text{J}_2.2$, and in this case we use (T4) to establish ($*$). 
Finally, the $6$-dimensional representations arise only for $r$ an even power of $2$. 
If $r=4$ then $G$ has no regular orbit on $V$ because $|G|>|V|$. 
For $r=16$, we use (T4) to establish ($*$). 
For $r \in \{64,256\}$, we use (T1).

\subsection{$S\cong\text{J}_1$} 

The representations with $d \in \{14,22,27,31,34\}$ do not arise for any of the given values of $r$. 
The $20$-dimensional representations arise only for $r_0=2$, but $G$ has no regular orbit on $V$ when $(d,r)=(20,2)$, by \cite[Theorem~1.1]{FMOW}. 
The $7$-dimensional representations arise only for $r_0=11$, and we use (T4) to establish ($*$) for $(d,r)=(7,11)$.

\subsection{$S\cong\text{M}_{24}$} 

The $45$-dimensional representations do not arise for $r_0=2$. 
For $(d,r)=(44,2)$, we use (T1) to establish ($*$). 
The $23$-dimensional representations arise only for $r_0 \not \in \{2,3\}$, and we use (T1) to establish ($*$) for $r=5$. 
The $22$-dimensional representations arise only for $r_0=3$, and we use (T4) to establish ($*$) for $r=3$.
The $11$-dimensional representations arise only for $r_0=2$. 
If $r \in \{2,4\}$ then $G$ has no regular orbit on $V$ because $|G|>|V|$. 
For $r \in \{16,32\}$, we use (T1) to establish ($*$). 
If $r=8$ then $G \cong \mathbb{F}_8^\times \times \text{M}_{24}$ or $G \cong \text{M}_{24}$. 
In the first case, we check that $G$ has no regular orbit on $V$ by enumerating the $G$-orbits in {\sc Magma}~\cite{Magma}. 
In the second case, we use (T4) to establish ($*$).

\subsection{$S\cong\text{M}_{23}$} 

The $45$-dimensional representations do not arise for $r_0=2$. 
For $(d,r)=(44,2)$, we use (T1) to establish ($*$). 
The $22$-dimensional representations arise only for $r_0 \not \in \{2,23\}$, so it remains to consider $r=3$. 
In this case, $V$ is the irreducible restriction of the $22$-dimensional module for $\text{M}_{24}$ considered above, so Lemma~\ref{lemma2.1}(iv) implies that $\Sigma(GV)$ satisfies ($*$). 
The $21$-dimensional representations arise only for $r_0=23$. 
The $11$-dimensional representations arise only for $r_0=2$. 
If $r \in \{2,4\}$ then $G$ has no regular orbit on $V$ because $|G|>|V|$. 
For $r=8$, we use (T3) to establish ($*$). 
For $r=16$, we use (T1).

\subsection{$S\cong\text{M}_{22}$}

The $24$- and $48$-dimensional representations with $E(G) \cong 12.\text{M}_{22}$ arise only if $r_0 \in \{5,11\}$, so there is nothing left to consider in these cases. 
Similarly, the $16$- and $32$-dimensional representations with $E(G) \cong 4.\text{M}_{22}$ arise only for $r_0=7$, and an irrationality in the Brauer character for $d=16$ implies that $r \neq 7$. 
The $36$-dimensional representations with $E(G) \cong 6.\text{M}_{22}$ arise only for $r_0=11$. 

The $28$-dimensional representations with $E(G) \cong 2.\text{M}_{22}$ arise only for $r_0=5$. 
The $10$-dimensional representations with $E(G) \cong 2.\text{M}_{22}$ arise only in odd characteristic, and an irrationality in the Brauer character for $r_0 \neq 7$ implies that $r \notin \{3,5,13,17,19\}$. 
This leaves $r=7$, $9$ and $11$, for which we establish ($*$) using (T4), (T3) and (T1), respectively. 

Now consider $E(G) \cong 3.\text{M}_{22}$. 
The $42$- and $45$-dimensional representations do not arise for $r=2$, and the $21$-dimensional representations do not arise for any $r \leqslant 4$. 
For $(d,r)=(30,2)$, we use (T1) to establish ($*$). 
The $15$-dimensional representations arise only for $r$ an even power of $2$, and we use (T1) to establish ($*$) for $r=4$.
The $12$-dimensional representations arise only for $r_0=2$. 
If $r \in \{2,4\}$ then $G$ has no regular orbit on $V$ by \cite[Theorem~1.1]{FMOW} and Lemma~\ref{fieldext}. 
Explicitly, \cite[Theorem~1.1]{FMOW} tells us that the base size of $3.\text{M}_{22} \leqslant G$ on $\mathbb{F}_2^{12}$ is $3$, and Lemma~\ref{fieldext} then implies that $G$ has no regular orbit on $\mathbb{F}_4^{12}$, because if it did then the base size of $3.\text{M}_{22}$ on $\mathbb{F}_2^{12}$ would be at most $2$. 
For $r \in \{8,16\}$, we use (T1) to establish ($*$). 
The $6$-dimensional representations arise only for $r$ an even power of $2$. 
If $r=4$ then $G$ has no regular orbit on $V$ because $|G|>|V|$. 
For $r=16$, we enumerate the $G$-orbits of $V$ using {\sc Magma}~\cite{Magma} and find that $G$ has no regular orbit on $V$. 
For $r=64$, we use (T1) to establish ($*$). 

Finally, consider $E(G) \cong \text{M}_{22}$. 
The $42$- and $45$-dimensional representations arise only in odd characteristic, and the $20$-dimensional representations arise only for $r_0=11$. 
For $(d,r)=(34,2)$, we use (T1) to establish ($*$). 
The $21$-dimensional representations arise only for $r_0 \not \in \{2,11\}$, so it remains to consider $r=3$, in which case we again use (T1). 
The $10$-dimensional representations arise only for $r_0=2$. 
If $r \in \{2,4\}$ then $G$ has no regular orbit on $V$ by \cite[Theorem~1.1]{FMOW} and Lemma~\ref{fieldext}. 
For $r=8$ and $16$, we establish ($*$) using (T3) and (T1), respectively.

\subsection{$S=\text{M}_{12}$} 

First consider $E(G) \cong 2.\text{M}_{12}$. 
The representations with $d \in \{24,32\}$ do not arise for $r_0=2$. 
The $12$-dimensional representations arise only for $r_0=3$, but if $r=3$ then $G$ has no regular orbit on $V$ by \cite[Theorem~1.1]{FMOW}. 
The $10$-dimensional representations arise only in odd characteristic, and an irrationality in the Brauer character implies that $r \neq \{5,7\}$. 
If $r=3$ then $G$ has no regular orbit on $V$ because $|G|>|V|$. 
For $r=9$, we use (T1) to establish ($*$). 
The $6$-dimensional representations arise only for $r_0=3$. 
If $r \in \{3,9\}$ then $G$ has no regular orbit on $V$ by \cite[Theorem~1.1]{FMOW} and Lemma~\ref{fieldext}. 
For $r=27$, we use (T4) to establish ($*$). 

Now consider $E(G) \cong \text{M}_{12}$. 
The representations with $d \in \{22,29,30,32,34\}$ do not arise for $r_0=2$. 
The $20$-dimensional representations also do not arise for $r_0=2$, and we use (T1) to establish ($*$) in the remaining case $r=3$. 
The $16$-dimensional representations arise only for $r_0 \neq 3$. 
An irrationality in the Brauer character implies that $r \neq 2$, and for $r=4$ we use (T1) to establish ($*$). 
The $15$-dimensional representations arise only for $r_0=3$, so we are left to consider $r=3$, in which case we use (T4) to establish ($*$). 
The $11$-dimensional representations arise only for $r_0 \not \in \{2,3\}$. 
We establish ($*$) for $r=5$ and $r=7$ using (T4) and (T1), respectively. 
Finally, the $10$-dimensional representations arise only for $r_0 \in \{2,3\}$. 
If $r=3$ then $G$ has no regular orbit on $V$ because $|G|>|V|$. 
If $r \in \{2,4\}$ then $G$ has no regular orbit on $V$ by \cite[Theorem~1.1]{FMOW} and Lemma~\ref{fieldext}. 
For $r \in \{8,9\}$, we use (T1) to establish ($*$).

\subsection{$S=\text{M}_{11}$} 

The $24$-dimensional representations arise only for $r_0=3$, so there is nothing left to consider in this case. 
Similarly, the $9$-dimensional representations arise only for $r_0=11$. 
The $16$-dimensional representations arise only for $r_0 \neq 3$, and an irrationality in the Brauer character for $r_0=2$ implies that $r \neq 2$. 
The $11$-dimensional representations arise only for $r_0 \not \in \{2,3\}$, and we use (T1) to establish ($*$) for $r=5$. 
The $5$-dimensional representations arise only for $r_0=3$. 
If $r=3$ then $G$ has no regular orbit on $V$ because $|G|>|V|$. 
If $r=9$ then $G$ has a regular orbit on $V$ only if $|Z(G)|=1$, namely $G \cong \text{M}_{11}$, and in this case we use (T4) to establish ($*$). 
If $r=27$, we instead use (T1). 
Finally, if $d=10$ then there are at most three distinct representations: one when $r_0=2$ or $\sqrt{-2} \not \in \mathbb{F}_r$, two when $r_0=11$, and three otherwise.
If $r=2$ then $G$ has no regular orbit on $V$ because $|G|>|V|$. 
If $r=4$, we use (T3) to establish ($*$). 
If $r=3$ then, by \cite[Theorem~1.1]{FMOW}, $G$ has no regular orbit on $V$ in the representation where involutions have Brauer character value $2$. 
In the other two representations, $G$ has a regular orbit if $G \cong \text{M}_{11}$, by \cite[Theorem~1.1]{FMOW}, but has no regular orbit if $G \cong 2 \times \text{M}_{11}$, which we check by enumerating the $G$-orbits in {\sc Magma}~\cite{Magma}. 
In the former case, we use (T3) to establish ($*$). 
For $r=5$, only one representation arises, and we again use~(T3) to establish ($*$).

\end{document}